\begin{document}
\theoremstyle{plain}
\newtheorem{thm}{Theorem}[section]
\newtheorem{theorem}[thm]{Theorem}
\newtheorem{lemma}[thm]{Lemma}
\newtheorem{corollary}[thm]{Corollary}
\newtheorem{corollary*}[thm]{Corollary*}
\newtheorem{proposition}[thm]{Proposition}
\newtheorem{proposition*}[thm]{Proposition*}
\newtheorem{conjecture}[thm]{Conjecture}
%%%%%%%%%%%%%%%%%%%% Text roman %%%%%%%%%%%%%%%%%%%%%%%%%%%%%
\theoremstyle{definition}
\newtheorem{construction}[thm]{Construction}
\newtheorem{notations}[thm]{Notations}
\newtheorem{question}[thm]{Question}
\newtheorem{problem}[thm]{Problem}
\newtheorem{remark}[thm]{Remark}
\newtheorem{remarks}[thm]{Remarks}
\newtheorem{definition}[thm]{Definition}
\newtheorem{claim}[thm]{Claim}
\newtheorem{assumption}[thm]{Assumption}
\newtheorem{assumptions}[thm]{Assumptions}
\newtheorem{properties}[thm]{Properties}
\newtheorem{example}[thm]{Example}
\newtheorem{comments}[thm]{Comments}
\newtheorem{blank}[thm]{}
\newtheorem{observation}[thm]{Observation}
\newtheorem{defn-thm}[thm]{Definition-Theorem}

\newcommand{\sM}{{\mathcal M}}

%%%%%%%%%%%%%%%%%%%%%%%%%%%%%%%%%%%%%%%%%%%%%%%%%%%%%%%%%%%%%%

\title{A proof of the Faber intersection number conjecture}
        \author{Kefeng Liu}
        \address{Center of Mathematical Sciences, Zhejiang University, Hangzhou, Zhejiang 310027, China;
                Department of Mathematics,University of California at Los Angeles,
                Los Angeles, CA 90095-1555, USA}
        \email{liu@math.ucla.edu, liu@cms.zju.edu.cn}
        \author{Hao Xu}
        \address{Center of Mathematical Sciences, Zhejiang University, Hangzhou, Zhejiang 310027, China}
        \email{haoxu@cms.zju.edu.cn}

        \begin{abstract}
        We prove the famous Faber intersection number conjecture and other more general results
         by using a recursion formula of $n$-point functions
        for intersection numbers on moduli spaces of curves. We also
        present some vanishing properties of Gromov-Witten invariants.
        \end{abstract}
    \maketitle

\section{Introduction}
Starting from the work of Mumford, one fundamental problem in
algebraic geometry is the study of intersection theory on moduli
spaces of stable curves. Through the work of Witten and Kontsevich
we learned that the intersection theory of moduli spaces also has
striking connection to string theory and two dimensional gravity.
Denote by $\overline{\sM}_{g,n}$ the moduli space of stable
$n$-pointed genus $g$ complex algebraic curves. We have the morphism
that forgets the last marked point
$$
\pi: \overline{\sM}_{g,n+1}\longrightarrow \overline{\sM}_{g,n}.
$$
Denote by $\sigma_1,\dots,\sigma_n$ the canonical sections of $\pi$,
and by $D_1,\dots,D_n$ the corresponding divisors in
$\overline{\sM}_{g,n+1}$. Let $\omega_{\pi}$ be the relative
dualizing sheaf, we have the following tautological classes on
moduli spaces of curves.
\begin{align*}
\psi_i&=c_1(\sigma_i^*(\omega_{\pi}))\\
\kappa_i&=\pi_*\left(c_1\left(\omega_{\pi}\left(\sum D_i\right)\right)^{i+1}\right)\\
\lambda_k&=c_k(\mathbb E),\quad 1\leq k\leq g,
\end{align*}
where $\mathbb E=\pi_*(\omega_{\pi})$ is the Hodge bundle.

Intuitively, $\psi_i$ is the first Chern class of the line bundle
corresponding to the cotangent space of the universal curve at the
$i$-th marked point and the fiber of $\mathbb E$ is the space of
holomorphic one forms on the algebraic curve.

 The classes $\kappa_i$ were
first introduced by Mumford \cite{Mu} on $\overline{\sM}_g$, their
generalization to $\overline{\sM}_{g,n}$ here is due to
Arbarello-Cornalba \cite{ArCo}.

We use Witten's notation
$$\langle\tau_{d_1}\cdots\tau_{d_n}\kappa_{a_1}\cdots\kappa_{a_m}\mid\lambda_{1}^{k_{1}}
    \cdots\lambda_{g}^{k_{g}}\rangle\triangleq \int_{\overline{\mathcal{M}}_{g,n}}\psi_{1}^{d_{1}}\cdots\psi_{n}^{d_{n}}\kappa_{a_1}\cdots\kappa_{a_m}\lambda_{1}^{k_{1}}
    \cdots\lambda_{g}^{k_{g}}.$$
These intersection numbers are called the Hodge integrals. They are
rational numbers because the moduli space of curves are orbifolds
(with quotient singularities) except in genus zero. Their degrees
should add up to $\dim \overline{\mathcal{M}}_{g,n}= 3g-3+n$.

Intersection numbers of pure $\psi$ classes
$\langle\tau_{d_1}\cdots\tau_{d_n}\rangle$ are often called
intersection indices or descendant integrals. Faber's algorithm
\cite{Fa2} reduces the calculation of general Hodge integrals to
intersection indices, based on Mumford's Chern character formula
\cite{Mu}
$$
{\rm ch}_{2g-1}(\mathbb E)= \frac{ B_{2g}}{(2g)!}\left[
\kappa_{2g-1}-\sum_{i=1}^{n}\psi_i^{2g-1}+\frac12\sum_{\xi\in\Delta}{l_{\xi}}_*
\left(\sum_{i=0}^{2g-2}\psi_{n+1}^i(-\psi_{n+2})^{2g-2-i}\right)\right],
$$
where $\Delta$ enumerates all boundary divisors and ${l_{\xi}}_*$ is
the push-forward map under the natural inclusion.

 The
celebrated Witten-Kontsevich theorem \cite{Ko, Wi} asserts that the
generating function of intersection indices
\begin{equation*}\label{gen}
F(t_0, t_1, \ldots)= \sum_{g} \sum_{\bold n}
\langle\prod_{i=0}^\infty \tau_{i}^{n_i}\rangle_{g}
\prod_{i=0}^\infty \frac{t_i^{n_i} }{n_i!}
\end{equation*}
is governed by the KdV hierarchy, which provides a recursive way to
compute all these intersection numbers.

The tautological ring $\mathcal R^*(\sM_g)$ is defined to be the
smallest $\mathbb Q$-subalgebra of the Chow ring $\mathcal
A^*(\sM_g)$ generated by the tautological classes $\kappa_i$ and
$\lambda_i$. Mumford \cite{Mu} proved that the ring $\mathcal
R^*(\sM_g)$ is in fact generated by the $g-2$ classes
$\kappa_1,\dots, \kappa_{g-2}$.

It is a theorem of Looijenga \cite{Lo} that $\dim\mathcal
R^k(\mathcal M_g)=0,\ k>g-2$ and $\dim\mathcal R^{g-2}(\mathcal
M_g)\leq1$. Later Faber proved that actually $\dim\mathcal
R^{g-2}(\mathcal M_g)=1$.

\subsection*{Faber's conjecture}

Around 1993, Faber \cite{Fa} proposed three remarkable conjectures
about the structure of the tautological ring $\mathcal R^*(\sM_g)$
which we briefly state as follows:

\begin{enumerate}
\item[i)] For $0 \leq k \leq g-2$, the natural
product $$R^k(\mathcal M_g) \times R^{g-2-k}(\mathcal M_g)
\rightarrow R^{g-2}(\mathcal M_g) \cong \mathbb Q$$ is a perfect
pairing.

\item[ii)] The $[g/3]$ classes $\kappa_1,\dots,\kappa_{[g/3]}$
generate the ring $\mathcal R^*(\sM_g)$, with no relations in
degrees $\leq [g/3]$.

\item[iii)] Let $\sum_{j=1}^n d_j=g-2$ and $d_j\geq0$. Then
\begin{equation}\label{eqfa1}
\pi_*(\psi_1^{d_1+1}\dots\psi_n^{d_n+1})=\sum_{\sigma\in
S_n}\kappa_\sigma=\frac{(2g-3+n)!}{(2g-2)!!\prod_{j=1}^{n}(2d_j+1)!!}\kappa_{g-2},
\end{equation}
where $\kappa_\sigma$ is defined as follows: write the permutation
$\sigma$ as a product of $\nu(\sigma)$ disjoint cycles
$\sigma=\beta_1\cdots\beta_{\nu(\sigma)}$, where we think of the
symmetric group $S_n$ as acting on the $n$-tuple $(d_1,\dots ,d_n)$.
Denote by $|\beta|$ the sum of the elements of a cycle $\beta$. Then
$ \kappa_\sigma=\kappa_{|\beta_1|}\kappa_{|\beta_2|}\dots
\kappa_{|\beta_{\nu(\sigma)}|}$.

\end{enumerate}

Part (i) is called Faber's perfect pairing conjecture, which is
still open. Faber has verified it for $g\leq23$.

Part (ii) has been proved independently by Morita \cite{Mo} and
Ionel \cite{Io} with very different methods. As pointed out by Faber
\cite{Fa}, Harer's stability result implies that there is no
relation in degrees $\leq [g/3]$.

Part (iii) of Faber's conjectures is the intersection number
conjecture, whose importance lies in that it computes all top
intersections in the tautological ring $\mathcal R^*(\sM_g)$ and
determines its ring structure if we assume Faber's perfect pairing
conjecture. Theoretically it gives the dimension of tautological
rings by computing the rank of intersection matrices which we will
discuss in a subsequent work.

Faber's conjecture is a fundamental question mentioned in monographs
such as \cite{Gat, HaMo} that many algebraic geometers have worked
on. In this paper, we prove the Faber intersection number conjecture
completely. First we recall two equivalent formulations.

The Faber intersection number conjecture is equivalent to
\begin{equation}\label{eqfa2}
\int_{\overline{\sM}_{g,n}}\psi_1^{d_1}\dots\psi_n^{d_n}\lambda_g\lambda_{g-1}=
\frac{(2g-3+n)!|B_{2g}|}{2^{2g-1}(2g)!\prod_{j=1}^{n}(2d_j-1)!!},
\end{equation}
where $B_{2g}$ denotes the $2g$-th Bernoulli number. By Mumford's
formula for the Chern character of the Hodge bundle, the above
identity is equivalent to
\begin{align}\label{eqfa3}
\frac{(2g-3+n)!}{2^{2g-1}(2g-1)!\prod_{j=1}^n(2d_j-1)!!}
=&\langle\tau_{2g}\prod_{j=1}^n\tau_{d_j}\rangle_g-\sum_{j=1}^{n}
\langle\tau_{d_j+2g-1}\prod_{i\neq j}\tau_{d_i}\rangle_g\nonumber\\
&+\frac{1}{2}\sum_{j=0}^{2g-2}(-1)^j\langle\tau_{2g-2-j}\tau_j\prod_{i=1}^n\tau_{d_i}\rangle_{g-1}\\\nonumber
&+\frac{1}{2}\sum_{\underline{n}=I\coprod
J}\sum_{j=0}^{2g-2}(-1)^j\langle\tau_{j}\prod_{i\in
I}\tau_{d_i}\rangle_{g'}\langle\tau_{2g-2-j}\prod_{i\in
J}\tau_{d_i}\rangle_{g-g'},
\end{align}
where $d_j\geq1$, $\sum_{j=1}^{n}d_j=g+n-2$. We refer to \cite{Fa,
LX1} for discussions of the above equivalences.

The following interesting relation is observed by Faber and proved
by Zagier using the Faber intersection number conjecture (see
\cite{Fa})
$$\kappa_1^{g-2}=\frac{1}{g-1}2^{2g-5}((g-2)!)^2\kappa_{g-2}.$$
In fact, from \eqref{eqfa1}, the above relation is equivalent to a
combinatorial identity
\begin{equation*}
\sum_{k=1}^g\left(\frac{(-1)^k}{k!}(2g+1+k)\sum_{g=m_1+\dots+m_k\atop
m_i>0}\binom{2g+k}{2m_1+1,\dots,2m_k+1}\right)=(-1)^g2^{2g}(g!)^2.
\end{equation*}
We learned of an elegant proof from Jian Zhou using the residue
theorem.

Faber \cite{Fa} proved identity \eqref{eqfa3} when $n=1$ using
explicit formulae of up to three-point functions. The identity
\eqref{eqfa2} was shown to follow from the degree 0 Virasoro
conjecture for $\mathbb P^2$ by Getzler and Pandharipande
\cite{Ge-Pa}. In 2001 Givental \cite{Giv} has announced a proof of
Virasoro conjecture for $\mathbb P^n$. Y.-P. Lee and R.
Pandharipande are writing a book \cite{LP} giving details. Recently
Teleman \cite{Te} announced a proof of the Virasoro conjecture for
manifolds with semi-simple quantum cohomology. His argument depends
crucially on the Mumford conjecture about the stable rational
cohomology rings of the moduli spaces proved by Madsen and Weiss
\cite{MW}.

Goulden, Jackson and Vakil \cite{GJV} recently give an enlightening
proof of identity \eqref{eqfa1} for up to three points. Their
remarkable proof uses relative virtual localization and a
combinatorialization of the Hodge integrals, establishing
connections to double Hurwitz numbers.

Our alternative approach is quite direct, we prove identity
\eqref{eqfa3} for all $g$ and $n$ by using a recursive formula of
$n$-point functions. Actually, the $n$-point function formula has
far-reaching applications. Recently Zhou \cite{Zh} used our results
on $n$-point functions in his computation of Hurwitz-Hodge
integrals, which leads to a proof of the crepant resolution
conjecture of type A surface singularities for all genera.

\

\noindent{\bf Acknowledgements.} The authors would like to thank
Professors Sergei Lando, Jun Li, Chiu-Chu Melissa Liu, Ravi Vakil
and Jian Zhou for helpful communications.

\vskip 30pt
\section{The n-point functions}

\begin{definition}
We call the following generating function
$$F(x_1,\dots,x_n)=\sum_{g=0}^\infty F_g(x_1,\dots,x_n)=\sum_{g=0}^\infty\sum_{\sum d_j=3g-3+n}\langle\tau_{d_1}\cdots\tau_{d_n}\rangle_g\prod_{j=1}^n x_j^{d_j}$$
the $n$-point functions.
\end{definition}

Consider the following ``normalized'' $n$-point function
$$G(x_1,\dots,x_n)=\exp\left(\frac{-\sum_{j=1}^n
x_j^3}{24}\right) F(x_1,\dots,x_n).$$ We will let
$G_g(x_1,\dots,x_n)$ denote the degree $3g-3+n$ homogenous component
of $G(x_1,\dots,x_n)$.

In contrast with the original $n$-point function, its normalization
has some distinct properties (see \cite{LX2}). For example, the
coefficient of $z^k\prod_{j=1}^{n}x_j^{d_j}$ in
$G_g(z,x_1,\dots,x_n)$ is zero whenever $k>2g-2+n$.

It's well-known that
$$F_0(x_1,\dots,x_n)=G_0(x_1,\dots,x_n)=(x_1+\cdots+x_n)^{n-3}.$$

There are explicit formulae for one and two-point functions due to
Witten \cite{Wi} and Dijkgraaf (see \cite{Fa}) respectively
$$G(x)=\frac{1}{x^2},\qquad G(x,y)=\frac{1}{x+y}\sum_{k\geq0}\frac{k!}{(2k+1)!}\left(\frac{1}{2}xy(x+y)\right)^k.$$
In an unpublished note \cite{Za} (kindly sent to us by Faber),
Zagier obtained a marvelous formula of the three-point function (see
\cite{LX2}).

We proved in \cite{LX2} the following recursion formula for general
normalized $n$-point function.
\begin{proposition} \cite{LX2} For $n\geq2$,
\begin{equation*}
G(x_1,\dots,x_n)=\sum_{r,s\geq0}\frac{(2r+n-3)!!}{4^s(2r+2s+n-1)!!}P_r(x_1,\dots,x_n)\Delta(x_1,\dots,x_n)^s,
\end{equation*}
where $P_r$ and $\Delta$ are homogeneous symmetric polynomials
defined by
\begin{align*}
\Delta(x_1,\dots,x_n)&=\frac{(\sum_{j=1}^nx_j)^3-\sum_{j=1}^nx_j^3}{3},\nonumber\\
P_r(x_1,\dots,x_n)&=\left(\frac{1}{2\sum_{j=1}^nx_j}\sum_{\underline{n}=I\coprod
J}(\sum_{i\in I}x_i)^2(\sum_{i\in J}x_i)^2G(x_I)
G(x_J)\right)_{3r+n-3}\nonumber\\
&=\frac{1}{2\sum_{j=1}^nx_j}\sum_{\underline{n}=I\coprod
J}(\sum_{i\in I}x_i)^2(\sum_{i\in J}x_i)^2\sum_{r'=0}^r
G_{r'}(x_I)G_{r-r'}(x_J),
\end{align*}
where $I,J\ne\emptyset$, $\underline{n}=\{1,2,\ldots,n\}$ and
$G_g(x_I)$ denotes the degree $3g+|I|-3$ homogeneous component of
the normalized $|I|$-point function $G(x_{k_1},\dots,x_{k_{|I|}})$,
where $k_j\in I$.
\end{proposition}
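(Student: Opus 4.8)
The plan is to prove the recursion by induction on the number of points $n$, deriving it from the Witten--Kontsevich theorem in the form of the Virasoro constraints (equivalently, the Dijkgraaf--Verlinde--Verlinde recursion) for the descendant numbers $\langle\tau_{d_1}\cdots\tau_{d_n}\rangle_g$. First I would single out one variable, say $x_1$, and repackage the whole family of Virasoro operators $L_k$, $k\ge-1$, into a single functional equation for the generating series $F(x_1,\dots,x_n)=\sum_g F_g$ --- the standard loop--equation manoeuvre. Schematically this equation expresses $F(x_1,\dots,x_n)$ through three kinds of terms: a ``loop insertion'' at $x_1$ acting on the $(n-1)$-point functions built from the remaining variables (this is the source of kernels of the shape $\tfrac1{\sum_j x_j}(\sum_{i\in I}x_i)^2(\sum_{i\in J}x_i)^2$, essentially the spectral data of the Airy curve $x=z^2/2$, $y=z$), a genus--lowering term in which two new arguments are fused along $x_1$, and the convolution $\sum_{\underline n=I\sqcup J}F(x_I)F(x_J)$ over nonempty partitions.

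The decisive step is the passage to $G=\exp(-\sum_j x_j^3/24)\,F$. The cubic prefactor is engineered so that the ``$\sum_j x_j^3$'' contributions produced by the string/dilaton anomaly and by the genus--lowering term combine with it; after this the functional equation for $G$ collapses into a recursion relating $G$ in $n$ variables only to the symmetric convolution $\frac1{2\sum_j x_j}\sum_{\underline n=I\sqcup J}(\sum_{i\in I}x_i)^2(\sum_{i\in J}x_i)^2 G(x_I)G(x_J)$, whose homogeneous components are precisely the polynomials $P_r$, together with one ``multiply by $\Delta$ and lower the degree'' operation, where $\Delta=\big((\sum_j x_j)^3-\sum_j x_j^3\big)/3$ is exactly the symmetric cubic surviving the normalization. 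I would then solve this recursion explicitly: iterating the $\Delta$--operation $g$ times (genus cannot go below $0$) generates the geometric--type series $\sum_{s\ge0}(\cdots)\Delta^s$, and substituting the ansatz $G=\sum_{r,s\ge0}c_{r,s}P_r\Delta^s$ into the recursion and comparing homogeneous components (using $\deg P_r=3r+n-3$ and $\deg\Delta=3$, so that the degree $3g-3+n$ piece only involves the finitely many $(r,s)$ with $r+s=g$) reduces everything to a two--term recursion for the scalars $c_{r,s}$ whose unique solution with the forced value of $c_{r,0}$ is $c_{r,s}=\dfrac{(2r+n-3)!!}{4^s(2r+2s+n-1)!!}$. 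Homogeneity makes the right side a well--defined formal power series; the base case is the one--point function $G(x)=x^{-2}$ (Witten), and the two--point case recovers Dijkgraaf's formula as a consistency check.

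The step I expect to be the main obstacle is verifying that the exponential normalization does exactly what the second paragraph claims: that after multiplying through by $\exp(-\sum_j x_j^3/24)$ the transformed Virasoro/DVV equation genuinely loses all residual genus bookkeeping and reduces to the stated recursion among the symmetric objects $P_r$ and powers of $\Delta$. This is a delicate computation with the $(2d+1)!!$ weights coming from the Virasoro operators, their interaction with multiplication by $\prod_j x_j^{d_j}$, and the expansion of the cubic exponential; it is also the step in which the exact polynomial $\Delta$ and the exact coefficients $\tfrac{(2r+n-3)!!}{4^s(2r+2s+n-1)!!}$ are pinned down. Once this reduction is secured, solving the scalar recursion and identifying $c_{r,s}$ is routine.
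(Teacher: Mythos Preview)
Your strategy is sound and lands close to the paper's, but the direction of the argument is reversed. The paper does not \emph{derive} the formula: it takes the closed expression $G=\sum_{r,s}\frac{(2r+n-3)!!}{4^s(2r+2s+n-1)!!}P_r\Delta^s$ as given and verifies that it satisfies the Witten--Kontsevich differential equation for $G_g(y,x_1,\dots,x_n)$ displayed immediately after the statement (the one with a distinguished variable $y$), deferring the tedious check to \cite{LX2}. Since that equation together with the one-point initial data determines $G$ uniquely, the verification suffices.

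The derivation you outline is, however, also implicitly present in the paper---just run in the opposite order. In the proof of Proposition~2.3 the authors first read off from the closed formula the two-term recursion
\[
G_g=\frac{1}{2g+n-1}P_g+\frac{\Delta}{4(2g+n-1)}\,G_{g-1},
\]
exactly the scalar relation your ansatz would produce, and then show via the $H,H^{-1}$ computation that, after undoing the exponential normalization, it is equivalent to the recursion for $F_g$ which in turn is nothing but the first KdV equation combined with the dilaton equation. Reading that argument backwards (first KdV $\Rightarrow$ Proposition~2.3 $\Rightarrow$ the $G_g$ recursion above $\Rightarrow$ solve for the coefficients) is a legitimate alternative proof, and the ``delicate computation'' you correctly flag as the crux is precisely that $H,H^{-1}$ calculation. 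One simplification worth noting: you do not need the full Virasoro tower $L_k$, $k\ge -1$. Packaging all of them with a distinguished variable leads to the asymmetric W--K equation the paper displays, not directly to the symmetric $P_r$--$\Delta$ recursion; the symmetric form comes from the single relation $\langle\langle\tau_0\tau_1\rangle\rangle=\tfrac{1}{12}\langle\langle\tau_0^4\rangle\rangle+\tfrac{1}{2}\langle\langle\tau_0^2\rangle\rangle^2$ together with dilaton, so your starting point can be lightened considerably.
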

The proof amounts to check that $G(x_1,\dots,x_n)$, as recursively
defined in Proposition 2.2, satisfies the following
Witten-Kontsevich differential equation (see \cite{LX2}),
\begin{multline*}
y\frac{\partial}{\partial
y}\left((y+\sum_{j=1}^{n}x_j)^2G_g(y,x_1,\dots,x_n)\right)\\
=\frac{y}{8}(y+\sum_{j=1}^{n}x_j)^{4}G_{g-1}(y,x_1,\dots,x_n)-\frac{y^3}{8}(y+\sum_{j=1}^{n}x_j)^2 G_{g-1}(y,x_1,\dots,x_n)\\
+\frac{y}{2}\sum_{\underline{n}=I\coprod J}\left(\left(y+\sum_{i\in
I}x_i\right)\left(\sum_{i\in J}x_i\right)^3+2\left(y+\sum_{i\in
I}x_i\right)^2\left(\sum_{i\in
J}x_i\right)^2\right)G_{g'}(y,x_I)G_{g-g'}(x_J)\\
-\frac{1}{2}\left(y+\sum_{j=1}^{n}x_j\right)\left(\sum_{j=1}^{n}x_j\right)G_g(y,x_1,\dots,x_n).
\end{multline*}
The verification is tedious but straightforward. It will be included
in a updated version of the paper \cite{LX2}.

Recall the well-known string equation
$$\langle\tau_0\prod_{i=1}^{n}\tau_{k_i}\rangle_g=\sum_{j=1}^{n}\langle\tau_{k_j-1}\prod_{i\neq
j}\tau_{k_i}\rangle_g$$ and the dilaton equation
$$\langle\tau_1\prod_{i=1}^{n}\tau_{k_i}\rangle_g=(2g-2+n)\langle\prod_{i=1}^{n}\tau_{k_i}\rangle_g.$$
Note that the string equation can be equivalently written as
$$F(x_1,\dots,x_n,0)=(\sum_{j=1}^n x_j) F(x_1,\dots,x_n).$$

\begin{proposition}Let $n\geq2$. We have the following
recursive formula of $n$-point functions.
\begin{multline*}
(2g+n-1)F_g(x_1,\dots,x_n)=\frac{\left(\sum_{j=1}^n x_j\right)^3}{12} F_{g-1}(x_1,\dots,x_n)\\
+\frac{1}{2\left(\sum_{j=1}^n
x_j\right)}\sum_{g'=0}^g\sum_{\underline{n}=I\coprod J}
\left(\sum_{i\in I} x_i\right)^2\left(\sum_{i\in J} x_i\right)^2
F_{g'}(x_I)F_{g-g'}(x_J).
\end{multline*}
\end{proposition}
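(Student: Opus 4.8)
The plan is to extract this $n$-variable recursion for the genuine $n$-point functions $F_g$ from the closed formula for the normalized functions in Proposition 2.2, in three moves: isolate one homogeneous component, read off a clean recursion for the $G_g$, and then conjugate by the exponential relating $F$ and $G$. Throughout I write $X=\sum_{j=1}^n x_j$, $p_3=\sum_{j=1}^n x_j^3$, $X_I=\sum_{i\in I}x_i$, and keep $\Delta=\tfrac{1}{3}(X^3-p_3)$; I use the formal unstable values $F_0(x)=G_0(x)=x^{-2}$ and $F_0(x,y)=G_0(x,y)=(x+y)^{-1}$, since singleton blocks $I$ produce one-point factors below.

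First I would isolate the part of degree $3g-3+n$ in Proposition 2.2. The summand $P_r\Delta^s$ is homogeneous of degree $(3r+n-3)+3s$, so only $r+s=g$ contributes, and there $(2r+n-3)!!=(2(g-s)+n-3)!!$ and $(2r+2s+n-1)!!=(2g+n-1)!!$; thus $(2g+n-1)!!\,G_g=\sum_{s=0}^{g}4^{-s}(2(g-s)+n-3)!!\,P_{g-s}\,\Delta^{s}$. I would peel off the $s=0$ term, reindex the rest by $s\mapsto s+1$ so that the tail is exactly $\tfrac{1}{4}\Delta\cdot(2(g-1)+n-1)!!\,G_{g-1}$, divide by $(2g+n-3)!!$, and use $(2g+n-1)!!=(2g+n-1)(2g+n-3)!!$. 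This yields the compact recursion $(2g+n-1)\,G_g=P_g+\tfrac{1}{4}\Delta\,G_{g-1}$. Summing over $g$ and letting the Euler field $E=\sum_j x_j\,\partial_{x_j}$ act — so $E G_g=(3g-3+n)G_g$, whence $\sum_g 3(2g+n-1)G_g=(2E+n+3)G$, and $\sum_g P_g=\tfrac{1}{2X}\sum_{\underline{n}=I\coprod J}X_I^2X_J^2\,G(x_I)G(x_J)$ — this becomes the single functional identity $(2E+n+3)G=\tfrac{3}{2X}\sum_{\underline{n}=I\coprod J}X_I^2X_J^2\,G(x_I)G(x_J)+\tfrac{1}{4}(X^3-p_3)\,G$.

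Next I would substitute $G=e^{-p_3/24}F$. Since $E(p_3)=3p_3$, conjugating by $e^{-p_3/24}$ turns $2E+n+3$ into $(2E+n+3)-\tfrac{p_3}{4}$; and because $p_3$ splits additively over each partition $\underline{n}=I\coprod J$, the quadratic term becomes $e^{-p_3/24}$ times the same expression with the $F$'s. After cancelling $e^{-p_3/24}$ the two $\tfrac{p_3}{4}$-terms annihilate, leaving $(2E+n+3)F=\tfrac{3}{2X}\sum_{\underline{n}=I\coprod J}X_I^2X_J^2\,F(x_I)F(x_J)+\tfrac{X^3}{4}\,F$. Extracting the degree $3g-3+n$ component — on which $2E+n+3$ acts as multiplication by $3(2g+n-1)$, while on the right the matching of degrees forces $g'+g''=g$ in the product and replaces $F$ by $F_{g-1}$ in the cubic term — and dividing by $3$ produces exactly the asserted formula.

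I expect the hard part to be bookkeeping rather than anything conceptual: carrying out the double-factorial telescoping in the first step without a slip in the reindexing $s\mapsto s+1$, and, in the final step, tracking precisely which $p_3$-terms cancel, in particular making sure the one-point factors $F_0(x_i)=x_i^{-2}$ that show up when a block is a singleton are read with the correct unstable conventions. A more direct-looking but bumpier alternative would start from the Witten--Kontsevich differential equation quoted above, convert it to $F$ in the variable $y$ and then integrate in $y$; that route is complicated by the fact that the normalized one-point functions $G_g(y)$ vanish for $g\ge 1$ while the $F_g(y)$ do not, which is precisely the mismatch the exponential substitution repairs.
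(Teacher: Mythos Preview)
Your argument is correct and follows essentially the same route as the paper: both first extract from Proposition~2.2 the clean recursion $(2g+n-1)G_g=P_g+\tfrac{\Delta}{4}G_{g-1}$, and then convert from $G$ to $F$ via the exponential $e^{\pm p_3/24}$. The only cosmetic difference is that the paper multiplies both sides by $H^{-1}=e^{-p_3/24}$ and matches homogeneous components directly, whereas you package the same cancellation through the Euler operator identity $(2E+n+3)G_g=3(2g+n-1)G_g$; both computations rest on the additivity of $p_3$ across the partition $I\coprod J$ and yield the identical result.
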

\begin{proof}
From Proposition 2.2, we have
\begin{multline*}
G_g(x_1,\dots,x_n)=\sum_{r+s=g}\frac{(2r+n-3)!!}{4^s(2g+n-1)!!}P_r(x_1,\dots,x_n)\Delta(x_1,\dots,x_n)^s\\
=\frac{1}{2g+n-1}P_g(x_1,\dots,x_n)+\sum_{r+s=g-1}\frac{(2r+n-3)!!}{4^{s+1}(2g+n-1)!!}P_r(x_1,\dots,x_n)\Delta(x_1,\dots,x_n)^{s+1}\\
=\frac{1}{(2g+n-1)}P_g(x_1,\dots,x_n)+\frac{\Delta(x_1,\dots,x_n)}{4(2g+n-1)}G_{g-1}(x_1,\dots,x_n).
\end{multline*}

We define
$$H=\exp\left(\frac{\sum_{i=1}^n
x_i^3}{24}\right),\qquad H^{-1}=\exp\left(\frac{-\sum_{i=1}^n
x_i^3}{24}\right),$$
$$H_d=\frac{1}{d!}\left(\frac{\sum_{i=1}^n x_i^3}{24}\right)^d,\qquad H^{-1}_d=\frac{1}{d!}\left(\frac{-\sum_{i=1}^n x_i^3}{24}\right)^d.$$
Note that $\sum_{i=0}^d H_iH^{-1}_{d-i}=0$ if $d>0$.

Let LHS and RHS denote the left and right hand side of the recursion
in the lemma. We have
\begin{multline*}
H^{-1}\cdot RHS=\sum_{g=0}^\infty\left(\frac{1}{12}\left(\sum_{i=1}^n x_i\right)^3 G_{g-1}(x_1,\dots,x_n)+P_g(x_1,\dots,x_n)\right)\\
=\sum_{g=0}^\infty
\left((2g+n-1)G_g(x_1,\dots,x_n)+\frac{1}{12}\left(\sum_{i=1}^n
x_i^3\right) G_{g-1}(x_1,\dots,x_n)\right)
\end{multline*}

\begin{multline*}
H^{-1}\cdot LHS=\sum_{g=0}^\infty\sum_{a+b+c=g}(2a+2b+n-1)G_a(x_1,\dots,x_n)H_b H^{-1}_c\\
=\sum_{g=0}^\infty\sum_{a=0}^g(2a+n-1)G_a(x_1,\dots,x_n)\sum_{b+c=g-a}H_b
H^{-1}_c\\
+\sum_{g=0}^\infty\sum_{a+b+c=g}G_a(x_1,\dots,x_n)2b H_b H^{-1}_c\\
=\sum_{g=0}^\infty(2g+n-1)G_g(x_1,\dots,x_n)+\sum_{g=0}^\infty\frac{1}{12}\left(\sum_{i=1}^n
x_i^3\right) G_{g-1}(x_1,\dots,x_n).
\end{multline*}

\end{proof}

Only very recently, we realize that Proposition 2.3 has already been
embodied in the first KdV equation of the Witten-Kontsevich theorem.

The KdV hierarchy is the following hierarchy of differential
equations for $n\geq 1$,
$$\frac{\partial U}{\partial t_n}=\frac{\partial}{\partial t_0}R_{n+1},$$
where $R_n$ are Gelfand-Dikii differential polynomials in
$U,\partial U/\partial t_0,\partial^2 U/\partial t_0^2,\dots$,
defined recursively by
$$R_1=U,\qquad \frac{\partial R_{n+1}}{\partial t_0}=\frac{1}{2n+1}\left(\frac{\partial U}{\partial t_0}R_n+2U\frac{\partial R_n}{\partial t_0}+
\frac{1}{4}\frac{\partial^3}{\partial t_0^3}R_n\right).$$

 It is easy to see that
$$R_2=\frac{1}{2}U^2+\frac{1}{12}\frac{\partial^2U}{\partial
t_0^2},$$
$$R_3=\frac{1}{6}U^3+\frac{U}{12}\frac{\partial^3U}{\partial
t_0^3}+\frac{1}{24}(\frac{\partial U}{\partial
t_0})^2+\frac{1}{240}\frac{\partial^4U}{\partial t_0^4},$$
$$\vdots$$

The Witten-Kontsevich theorem states that the generating function
$$F(t_0, t_1, \ldots)= \sum_{g} \sum_{\bold n} \langle\prod_{i=0}^\infty \tau_{i}^{n_i}\rangle_{g} \prod_{i=0}^\infty
\frac{t_i^{n_i} }{n_i!}$$ is a $\tau$-function for the KdV
hierarchy, i.e. $\partial^2 F/\partial t_0^2$ obeys all equations in
the KdV hierarchy. The first equation in the KdV hierarchy is the
classical KdV equation
$$\frac{\partial U}{\partial t_1}=U\frac{\partial U}{\partial t_0}+\frac{1}{12}\frac{\partial^3 U}{\partial t_0^3}.$$

By the Witten-Kontsevich theorem, we have
$$\frac{\partial^3 F}{\partial t_1\partial t_0^2}=\frac{\partial F}{\partial t_0^2}\frac{\partial F}{\partial t_0^3}+
\frac{1}{12}\frac{\partial^5 F}{\partial t_0^5}.$$ Integrating each
side with respect to $t_0$ and putting
$\langle\langle\tau_{k_1}\cdots\tau_{k_n}\rangle\rangle:=\partial^nF/\partial
t_{k_1}\cdots\partial t_{k_n},$, we get
$$\langle\langle\tau_0\tau_1\rangle\rangle=\frac{1}{12}\langle\langle\tau_0^4\rangle\rangle
+\frac{1}{2}\langle\langle\tau_0^2\rangle\rangle\langle\langle\tau_0^2\rangle\rangle.$$
Then Proposition 2.3 follows by applying the dilaton equation.

\vskip 30pt
\section{The Faber intersection number conjecture}
Now we explain our approach to prove identity \eqref{eqfa3}, hence
the Faber intersection number conjecture. We establish its
relationship with $n$-point functions.

For the sake of brevity, we introduce the following notations
$$
L_g^{a,b}(y,x_1\dots,x_n) =\sum_{g'=0}^g\sum_{\underline{n}=I\coprod
J} (y+\sum_{i\in I}x_i)^a (-y+\sum_{i\in J}x_i)^b
F_{g'}(y,x_I)F_{g-g'}(-y,x_J),
$$
where $a, b\in\mathbb Z$. We regard $L_g^{a,b}(y,x_1\dots,x_n)$ as a
formal series in $\mathbb Q[x_1,\dots,x_n][[y,y^{-1}]]$ with $\deg
y<\infty$.

We now prove that the Faber intersection number conjecture can be
reduced to three statements about the coefficients of the above
functions.
\begin{proposition} We have
\begin{enumerate}
\item[i)]
$$\left[L^{0,0}_g(y,x_1\dots,x_n)\right]_{y^{2g-2}}=0;$$

\item[ii)] For $k>2g$,
$$\left[L^{2,2}_g(y,x_1\dots,x_n)\right]_{y^k}=0;$$

\item[iii)] For
$d_j\geq 1$ and $\sum_{j=1}^n d_j=g+n$,
$$\left[L^{2,2}_g(y,x_1\dots,x_n)\right]_{y^{2g}\prod_{j=1}^n
x_j^{d_j}}= \frac{(2g+n+1)!}{4^g(2g+1)!\prod_{j=1}^n(2d_j-1)!!}.$$
\end{enumerate}
\end{proposition}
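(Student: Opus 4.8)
The plan is to reduce all three statements to identities for the normalized $n$-point functions and then feed in the recursion of Proposition~2.2 (equivalently 2.3). The first, elementary, observation is that because $y^3+(-y)^3=0$ the exponential relating $F$ to $G$ disappears from every summand of $L_g^{a,b}$: $F(y,x_I)F(-y,x_J)=\exp\bigl(\tfrac1{24}\sum_{j=1}^n x_j^3\bigr)G(y,x_I)G(-y,x_J)$. Writing $\ell_g^{a,b}(y,x_1,\dots,x_n)=\sum_{g'=0}^g\sum_{\underline{n}=I\coprod J}(y+\sum_{i\in I}x_i)^a(-y+\sum_{i\in J}x_i)^b G_{g'}(y,x_I)G_{g-g'}(-y,x_J)$, one gets $L_g^{a,b}=\sum_{b'\ge 0}\tfrac1{b'!}\bigl(\tfrac1{24}\sum_j x_j^3\bigr)^{b'}\ell_{g-b'}^{a,b}$, and since the correction factor carries no $y$, every assertion about a coefficient $[L_g^{a,b}]_{y^k}$ becomes the same assertion about the $[\ell_{g'}^{a,b}]_{y^k}$ with $g'\le g$. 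The second ingredient is to apply Proposition~2.3 to the $n+2$ variables $\{u,v,x_1,\dots,x_n\}$, specialize $u=y$, $v=-y$ (so that the sum of all the variables collapses to $X:=\sum_{j=1}^n x_j$), and split the partition sum according to whether $u$ and $v$ lie on the same side or on opposite sides. This gives the closed identity expressing $L_g^{2,2}(y,x_1,\dots,x_n)$ as $(2g+n+1)X\,F_g(y,-y,\vec x)-\tfrac1{12}X^4F_{g-1}(y,-y,\vec x)$ minus the boundary term $\sum_{g'}\sum_{\underline{n}=I\coprod J}(\sum_{i\in I}x_i)^2(\sum_{i\in J}x_i)^2F_{g'}(y,-y,x_I)F_{g-g'}(x_J)$, in which $|I|<n$; equivalently, Proposition~2.2 applied to the same enlarged set (and using $\Delta(y,-y,x_1,\dots,x_n)=\Delta(x_1,\dots,x_n)$, which has no $y$) expresses $G_g(y,-y,\vec x)$ through $\ell_g^{2,2}$, the $y$-free polynomial $\Delta$, and lower-complexity terms $G_{g'}(y,-y,x_I)$. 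The point is that $y$ now appears only inside $n$-point functions of the form $F_\bullet(y,-y,\dots)$, where the $\pm y$ collapse sharply bounds the power of $y$.

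Statements (i) and (ii) then reduce to one Lemma: $G_h(y,-y,x_1,\dots,x_m)$ has degree $\le 2h$ in $y$ for every $m\ge 0$. Granting it, $F_g(y,-y,\vec x)=\sum_h\tfrac1{h!}\bigl(\tfrac1{24}\sum_i x_i^3\bigr)^hG_{g-h}(y,-y,\vec x)$ has $y$-degree $\le 2g$, $F_{g-1}(y,-y,\vec x)$ has $y$-degree $\le 2g-2$, and the boundary term above has $y$-degree $\le 2g$; by the identity of the previous paragraph $[L_g^{2,2}]_{y^k}=0$ for $k>2g$, which is (ii), and a slightly sharper count (showing $\ell_g^{0,0}$ has $y$-degree $\le 2g-3$) gives (i). The Lemma itself I would prove by induction on $m$ using Proposition~2.2: after the partition split the pieces $G_{r'}(y,-y,x_I)$ with $|I|<m$ are controlled by the inductive hypothesis, and what remains is to show that the ``$\ell^{2,2}$-part'' of $P_r(y,-y,\vec x)$ has $y$-degree $\le 2r$, i.e.\ that the contributions to $y^{2r+1},\dots,y^{2r+m}$ cancel. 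That cancellation is checked by pairing the partition $(I,J)$ with $(J,I)$ (and $r'$ with $r-r'$) and comparing the opposite signs $(-1)^{|I|}$ and $(-1)^{|J|}$ coming from $\psi\mapsto-\psi$, using the explicit leading $y$-coefficient of $G_{r'}(y,x_I)$, which is computed recursively from $G(x)=x^{-2}$, $G(x,y)=\tfrac1{x+y}\sum_k\tfrac{k!}{(2k+1)!}\bigl(\tfrac12xy(x+y)\bigr)^k$ and Zagier's three-point formula.

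For (iii), by the reduction to $\ell$ it suffices to evaluate the coefficient of $y^{2g}\prod_jx_j^{d_j}$ in $\ell_g^{2,2}(y,x_1,\dots,x_n)$ for $d_j\ge1$, $\sum_jd_j=g+n$. I would argue by simultaneous induction on $g$ and $n$: substitute the recursion of Proposition~2.2 for $G_g(y,-y,\vec x)$, read off the coefficient of $y^{2g}\prod_jx_j^{d_j}$, discard every term killed by (i), (ii) or the inductive hypothesis, and identify what is left as a convolution over the genus split $g=g'+(g-g')$ and the set partition $\underline{n}=I\coprod J$ of leading-coefficient data already known in closed form; matching this with $\dfrac{(2g+n+1)!}{4^g(2g+1)!\prod_j(2d_j-1)!!}$ is then a finite identity in $g$, $n$ and the $d_j$. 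The base cases are $g=0$, where $G_0(x_1,\dots,x_m)=(x_1+\cdots+x_m)^{m-3}$ makes every sum explicit and one checks directly $[L_0^{2,2}]_{y^0\prod_jx_j}=\sum_{\underline{n}=I\coprod J}|I|!\,(n-|I|)!=(n+1)!$, together with the one-, two- and three-point functions quoted in Section~2.

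The main obstacle is exactly this bookkeeping in (iii). Once the recursion is inserted, the coefficient of $y^{2g}\prod_jx_j^{d_j}$ becomes a multiple sum over the genus split, over the set partition, and over the many ways the power $2g$ of $y$ is shared among the two $G$-factors and the two prefactors $(y+\sum_{i\in I}x_i)^2$ and $(-y+\sum_{i\in J}x_i)^2$; the induction only controls the subfamily of terms sitting at the extreme $y$-degree of each factor, so one must show that the rest either vanishes by (i)/(ii) or recombines, and that the surviving pieces telescope precisely to the claimed ratio of factorials and double factorials. Getting this to close will, I expect, require strengthening the inductive statement to carry the whole polynomial $[\ell_g^{2,2}(y,\vec x)]_{y^{2g}}\in\mathbb Q[x_1,\dots,x_n]$ rather than one of its coefficients, and it reduces in the end to a hypergeometric-type identity among products of double factorials. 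A preliminary, routine but necessary, point is to justify the manipulations inside $\mathbb Q[x_1,\dots,x_n][[y,y^{-1}]]$: that the partition sums in Propositions~2.2 and 2.3 stay divisible by $\sum_jx_j$ after the substitution $\{u,v\}\mapsto\{y,-y\}$, so that the factors $1/(2\sum_jx_j)$ are legitimate, and that all the series in play have $y$-degree bounded above.
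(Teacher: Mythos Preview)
Your approach differs from the paper's in a way that leaves real gaps, and the missing idea is worth spelling out.

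You specialize Proposition~2.3 to the $n+2$ variables $\{y,-y,x_1,\dots,x_n\}$ and try to control $L_g^{2,2}$ through the intermediate object $F_g(y,-y,\vec x)$. The paper does the opposite: it never forms $F_g(y,-y,\vec x)$. Instead it writes
\[
(2g+n)L_g^{a,b}=\sum_{g',I,J}(\cdots)^a(\cdots)^b\bigl[(2g'+|I|)F_{g'}(y,x_I)\bigr]F_{g-g'}(-y,x_J)+\text{(symmetric)},
\]
and applies Proposition~2.3 to each factor $F_{g'}(y,x_I)$ separately. The cubic term $(y+\sum_{i\in I}x_i)^3$ from the recursion merges with the prefactor $(y+\sum_{i\in I}x_i)^a$ to produce $L_{g-1}^{a+3,b}$, and the quadratic splitting term produces $L_{g-g'}^{a+1-s,b}$ with fewer $x$-variables. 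The induction then closes because of the elementary identity $\sum_j x_j=(y+\sum_{i\in I}x_i)+(-y+\sum_{i\in J}x_i)$, which lets one trade $[\,(\sum_j x_j)^3 L_{g-1}^{a,b}\,]_{y^k}$ for a combination of $[L_{g-1}^{a+3,b}]_{y^k}$, $[L_{g-1}^{a,b+3}]_{y^k}$, $[L_{g-1}^{a+2,b+1}]_{y^k}$, $[L_{g-1}^{a+1,b+2}]_{y^k}$. This is why the paper proves the more general Theorems~4.4 and~4.5 for \emph{all} integers $a,b$: the induction forces you off the single pair $(a,b)=(2,2)$.

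Concretely, your proposal has three gaps. For (i) you assert that ``a slightly sharper count'' gives $\deg_y\ell_g^{0,0}\le 2g-3$, but no argument is offered; your Lemma on $G_h(y,-y,\vec x)$ and your $L^{2,2}$ identity say nothing about $L_g^{0,0}$. For the Lemma itself, the claimed cancellation ``by pairing $(I,J)$ with $(J,I)$ and comparing the signs $(-1)^{|I|}$, $(-1)^{|J|}$'' does not do what you want: that swap shows $\ell_r^{2,2}(y,\vec x)$ is even in $y$, not that its $y$-degree is $\le 2r$. You would still need to kill the even powers $y^{2r+2},\dots$, and for that you need exactly the kind of recursion among the $L_g^{a,b}$ that the paper sets up. For (iii) you correctly diagnose that the bookkeeping does not close without strengthening the inductive statement; the paper's strengthening is precisely the passage to arbitrary $(a,b)$, after which the computation in Theorem~4.5 is a five-line chain using only the inductive hypothesis and $\sum_j(2d_j-1)=2g+n$.

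In short: the route through $F_g(y,-y,\vec x)$ is how the paper \emph{deduces} Corollary~3.2 from Proposition~3.1, not how it proves Proposition~3.1. Reversing that order forces you to prove your Lemma from scratch, and the cancellation you need there is essentially equivalent to Theorem~4.4 for $(a,b)=(2,2)$ at lower $n$ --- so you end up needing the paper's recursion anyway.
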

In fact, Proposition 3.1 is a special case of more general results
proved in the next section. Clearly identities (i) and (ii) of the
following corollary add up to the desired identity \eqref{eqfa3}.

\begin{corollary} We have
\begin{enumerate}
\item[i)] Let $d_j\geq0$ and $\sum_{j=1}^{n}d_j=g+n-2$. Then
\begin{multline*}
\langle\prod_{j=1}^n\tau_{d_j}\tau_{2g}\rangle_g =\sum_{j=1}^{n}
\langle\tau_{d_j+2g-1}\prod_{i\neq
j}\tau_{d_i}\rangle_g-\frac{1}{2}\sum_{\underline{n}=I\coprod
J}\sum_{j=0}^{2g-2}(-1)^j\langle\tau_{j}\prod_{i\in
I}\tau_{d_i}\rangle_{g'}\langle\tau_{2g-2-j}\prod_{i\in
J}\tau_{d_i}\rangle_{g-g'};
\end{multline*}

\item[ii)] Let $d_j\geq1$ and $\sum_{j=1}^{n}(d_j-1)=g-1$. Then
$$\sum_{j=0}^{2g}(-1)^j\langle\tau_{2g-j}\tau_j\prod_{i=1}^n\tau_{d_i}\rangle_{g}=
\frac{(2g+n-1)!}{4^{g}(2g+1)!\prod_{j=1}^n(2d_j-1)!!};$$

\item[iii)] Let $k>g$, $d_j\geq0$ and $\sum_{j=1}^{n}d_j=3g+n-2k-2$. Then
$$\sum_{j=0}^{2k}(-1)^j\langle\tau_{2k-j}\tau_j\prod_{i=1}^n\tau_{d_i}\rangle_{g}=0.$$
\end{enumerate}
\end{corollary}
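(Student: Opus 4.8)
The plan is to derive Corollary~3.2 from Proposition~3.1 by converting the three generating‑function coefficient identities into identities among the intersection numbers $\langle\tau_{d_1}\cdots\rangle_g$, using the string and dilaton equations together with the genus‑zero $n$‑point functions $F_0(x_1,\dots,x_m)=(x_1+\cdots+x_m)^{m-3}$, read formally as $x^{-2}$ for $m=1$ and $(x+x')^{-1}$ for $m=2$. Once the corollary is in hand, item~(i) at genus $g$ and item~(ii) at genus $g-1$ recombine into identity~\eqref{eqfa3} by a direct substitution, the constant $\tfrac12\cdot\tfrac1{4^{g-1}}=\tfrac1{2^{2g-1}}$ matching the two sides — the remark already made before the corollary.

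For (i) I would extract the coefficient of $y^{2g-2}\prod_j x_j^{d_j}$ from $[L^{0,0}_g]_{y^{2g-2}}=0$ (Proposition~3.1(i)). Expanding $L^{0,0}_g=\sum_{g'}\sum_{\underline n=I\coprod J}F_{g'}(y,x_I)F_{g-g'}(-y,x_J)$ and writing each factor as a series in intersection numbers, I separate the summands in which one factor is an \emph{unstable} genus‑zero function of at most two variables — namely $F_0(y)=y^{-2}$, which forces the exponent $2g$ in the other factor, or $F_0(y,x_i)=\sum_{k\ge0}(-1)^kx_i^ky^{-1-k}$, which forces the exponent $d_i+2g-1$ — from the remaining \emph{stable} summands. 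The unstable part contributes exactly $2\langle\tau_{2g}\prod_j\tau_{d_j}\rangle_g-2\sum_j\langle\tau_{d_j+2g-1}\prod_{i\ne j}\tau_{d_i}\rangle_g$ (two copies of each coming from the ends $g'=0$ and $g'=g$, with signs from $(-1)^k$ and from the $-y$), while the stable part, after symmetrizing the two factors, contributes $\sum_{\underline n=I\coprod J}\sum_{j=0}^{2g-2}(-1)^j\langle\tau_j\prod_I\tau_{d_i}\rangle_{g'}\langle\tau_{2g-2-j}\prod_J\tau_{d_i}\rangle_{g-g'}$, i.e.\ twice the sum in the statement. Setting the total to zero and dividing by $2$ gives (i); the string equation is what validates the stated forms of the low‑point genus‑zero functions.

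For (ii) and (iii) the key is that the alternating sum there is a coefficient of an $(n+2)$‑point function,
\[\sum_{j=0}^{2m}(-1)^j\Bigl\langle\tau_{2m-j}\tau_j\prod_{i=1}^n\tau_{d_i}\Bigr\rangle_g=\Bigl[F_g(x_1,\dots,x_n,y,-y)\Bigr]_{y^{2m}\prod_j x_j^{d_j}}.\]
To bring in $L^{2,2}_g$ I would apply the $n$‑point recursion of Proposition~2.3 to the $(n+2)$‑point function $F_g(x_1,\dots,x_n,y,z)$ and then put $z=-y$: the variables then sum to $s=\sum_j x_j$, and the partition sum splits according to whether $y,z$ lie in the same block or in different blocks, giving
\begin{multline*}(2g+n+1)F_g(x_1,\dots,x_n,y,-y)=\frac{s^3}{12}F_{g-1}(x_1,\dots,x_n,y,-y)\\+\frac1sL^{2,2}_g(y,x_1,\dots,x_n)+\frac1s\sum_{g'=0}^g\sum_{\underline n=I\coprod J}s_I^2s_J^2\,F_{g'}(y,-y,x_I)F_{g-g'}(x_J),\end{multline*}
with $s_I=\sum_{i\in I}x_i$. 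The last, ``doubled'' sum involves $F_{g'}(y,-y,x_I)$ only with $|I|<n$, hence is handled by induction on $n$, and the lower‑genus term $F_{g-1}(\dots,y,-y)$ by induction on $g$ — in each case the needed $y$‑coefficients either vanish by~(iii) at the smaller parameter or are known by~(ii) (the base cases $g=0$ and $n=1$, where the doubled sum is empty, being immediate). Multiplying by $s$, using the string equation — which turns $sF_g(\dots,y,-y)$ into the function with one extra zero variable, annihilates the two resulting odd‑in‑$y$ sums by the $y$‑evenness of $F_g(\dots,y,-y)$, and leaves $\sum_i$ of the target sum with one $d_i$ lowered — and then inserting Proposition~3.1(iii) for $m=g$, or Proposition~3.1(ii) for $m=k>g$ (which gives $0$), yields a relation pinning down $\sum_i[F_g(\dots,y,-y)]_{y^{2m},\,d_i\mapsto d_i-1}$ for every $(d_j)$; inverting it and simplifying the double factorials (using $\sum_i(2d_i-1)=2\sum_i d_i-n=2g+n$) produces the closed form $\frac{(2g+n-1)!}{4^g(2g+1)!\prod_j(2d_j-1)!!}$, respectively $0$.

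The bookkeeping in (i) is routine; the main obstacle is (ii)–(iii): isolating the $L^{2,2}_g$ contribution cleanly from the doubled term and the lower‑genus term, organizing the double induction on $(g,n)$, and — since the string equation hands back $\sum_i$ of the coefficient with an index lowered rather than the coefficient itself — performing the inversion that recovers each individual value while checking that the inductively supplied auxiliary terms collapse, so that the asserted constant emerges exactly.
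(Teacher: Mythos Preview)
Your proposal is correct and follows essentially the same route as the paper. For (i), both you and the paper identify the unstable genus-zero one- and two-point contributions in $L_g^{0,0}$ as precisely the extra $\langle\tau_{2g}\cdots\rangle_g$ and $\langle\tau_{d_j+2g-1}\cdots\rangle_g$ terms; for (ii)--(iii), both apply Proposition~2.3 to the $(n+2)$-point function $F_g(y,-y,x_1,\dots,x_n)$, isolate $L_g^{2,2}$ from the ``doubled'' piece, and induct on $(g,n)$, with the paper writing the resulting recursion as $\tfrac{s}{2}F_g(y,-y,\vec x)=\tfrac{s^4}{24(2g+n+1)}F_{g-1}(y,-y,\vec x)+\tfrac{1}{2(2g+n+1)}\bigl(L_g^{2,2}+\text{doubled}\bigr)$ and then first deriving the $\tau_0$-inserted version of (ii). Your ``inversion'' of the relation $\sum_i[F_g]_{d_i\mapsto d_i-1}=\text{known}$ is exactly the paper's ``induction on the maximum index $d_1$'' phrased differently, the paper additionally invoking the dilaton equation to assume $d_i\ge2$ before that step; one point you leave implicit but which makes the bookkeeping cleaner is that for $d_j\ge1$ the $g'=g$ part of the doubled term vanishes outright, since $s_J^2F_0(x_J)=s_J^{|J|-1}$ cannot contribute a monomial with every exponent $\ge1$.
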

\begin{proof}
Since one and two-point functions in genus $0$ are
$$F_0(x)=\frac{1}{x^2},\qquad F_{0}(x,y)=\frac{1}{x+y}=\sum_{k=0}^\infty (-1)^k\frac{x^k}{y^{k+1}},$$
it is consistent to define
$$\langle\tau_{-2}\rangle_0=1,\qquad \langle\tau_{k}\tau_{-1-k}\rangle_0=(-1)^k,\ k\geq0.$$

By allowing the index to run over all integers, we have

\begin{multline*}
\frac{1}{2}\sum_{\underline{n}=I\coprod
J}\sum_{j=0}^{2g-2}(-1)^j\langle\tau_{j}\prod_{i\in
I}\tau_{d_i}\rangle_{g'}\langle\tau_{2g-2-j}\prod_{i\in
J}\tau_{d_i}\rangle_{g-g'}
+\langle\prod_{j=1}^n\tau_{d_j}\tau_{2g}\rangle_g- \sum_{j=1}^{n}
\langle\tau_{d_j+2g-1}\prod_{i\neq j}\tau_{d_i}\rangle_g\\
=\frac{1}{2}\sum_{\underline{n}=I\coprod J}\sum_{j\in\mathbb
Z}(-1)^j\langle\tau_{j}\prod_{i\in
I}\tau_{d_i}\rangle_{g'}\langle\tau_{2g-2-j}\prod_{i\in
J}\tau_{d_i}\rangle_{g-g'}\\
=\left[\sum_{g'=0}^g\sum_{\underline{n}=I\coprod J}
F_{g'}(y,x_I)F_{g-g'}(-y,x_J)\right]_{y^{2g-2}\prod_{i=1}^n
x_i^{d_i}}\\
=\left[L_g^{0,0}(y,x_1,\dots,x_n)\right]_{y^{2g-2}\prod_{i=1}^n
x_i^{d_i}}=0.
\end{multline*}

From Proposition 2.3, we have
\begin{multline*}
\frac{1}{2}\left(\sum_{j=1}^n
x_j\right)F_{g}(x_1,\dots,x_n)=\frac{\left(\sum_{j=1}^n x_j\right)^4}{24(2g+n-1)} F_{g-1}(x_1,\dots,x_n)\\
+\frac{1}{2(2g+n-1)}\left(L_{g}^{2,2}(y,x_{\underline{n}})+\sum_{g'=0}^g\sum_{\underline{n}=I\coprod
J} \left(\sum_{i\in I} x_i\right)^2\left(\sum_{i\in J} x_i\right)^2
F_{g'}(y,-y,x_I)F_{g-g'}(x_J)\right).
\end{multline*}

By Proposition 3.1(ii)-(iii), we can use Proposition 2.3 to
inductively prove
$$\sum_{j=0}^{2k}(-1)^j\langle\tau_{2k-j}\tau_j\prod_{i=1}^n\tau_{d_i}\rangle_{g}=
\left[F_{g}(y,-y,x_1,\dots,x_n)\right]_{y^{2k}}=0,\quad\text{for
}k>g$$ and we have
$$\sum_{j=0}^{2g}(-1)^j\langle\tau_{2g-j}\tau_j\tau_0\prod_{i=1}^n\tau_{d_i}\rangle_{g}=
\frac{(2g+n)!}{4^{g}(2g+1)!\prod_{j=1}^n(2d_j-1)!!},$$ which, from
the string equation and induction on the maximum index (say $d_1$)
among $\{d_i\}$, implies (by the dilaton equation, we may assume
$d_i\geq 2$)
\begin{align*}
&\sum_{j=0}^{2g}(-1)^j\langle\tau_{2g-j}\tau_j\prod_{i=1}^n\tau_{d_i}\rangle_g\\
&=\sum_{j=0}^{2g}(-1)^j\langle\tau_0\tau_{2g-j}\tau_j\tau_{d_1+1}\prod_{i=2}^n\tau_{d_i}\rangle_g-
\sum_{k=2}^n\sum_{j=0}^{2g}(-1)^j\langle\tau_{2g-j}\tau_j\tau_{d_1+1}\tau_{d_k-1}\prod_{i\neq 1,k}\tau_{d_i}\rangle_g\\
&=\frac{(2g+n)!}{4^g(2g+1)!\prod_{j=1}^n(2d_j-1)!!(2d_1+1)}-\sum_{k=2}^n\frac{(2g+n-1)!(2d_k-1)}{4^g(2g+1)!\prod_{j=1}^n(2d_j-1)!!(2d_1+1)}\\
&=\frac{(2g+n-1)!}{4^g(2g+1)!\prod_{j=1}^n(2d_j-1)!!}.
\end{align*}
\end{proof}

So in order to prove the Faber intersection number conjecture, we
only need to prove the three statements (i)-(iii) in Proposition 3.1
about $n$-point functions. Actually we will prove more general
results which are stated as main theorems, Theorems 4.4 and 4.5 in
the next section. Proposition 3.1, therefore the Faber intersection
number conjecture, is a special case of these theorems.

\vskip 30pt
\section{Proof of main theorems}

The binomial coefficients $\binom{p}{k}$, for $k\geq0, p\in\mathbb
Z$ are given by
$$
\binom{p}{k}=\begin{cases} 0, & k<0,\\
1, & k=0,\\
\frac{p(p-1)\cdots(p-k+1)}{k!}, & k\geq1 .
\end{cases}
$$

\begin{lemma}
Let $a,b\in\mathbb Z$ and $n\geq 0$. Then
$$\sum_{i=0}^n\binom{i+a}{i}\binom{n-i+b}{n-i}=\binom{n+a+b+1}{n}.$$
\end{lemma}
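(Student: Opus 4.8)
The plan is to reduce the identity, by polynomiality, to the case of nonnegative integers $a,b$, and then to settle that case by a short generating function computation. Both steps are routine, so the only real issue is organizing them cleanly around the sign convention for $\binom{p}{k}$.

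First I would fix $n$ and observe that, with the stated convention, $a\mapsto\binom{i+a}{i}$ is a polynomial of degree $i$ in $a$, so for each fixed $b$ the left-hand side is a polynomial in $a$ of degree $\le n$; symmetrically, for each fixed $a$ it is a polynomial in $b$ of degree $\le n$. The right-hand side $\binom{n+a+b+1}{n}$ is likewise a polynomial of degree $n$ in each of $a$ and $b$ separately. Hence it suffices to verify the identity for all $a,b\in\mathbb Z_{\ge 0}$: granting that, fix $b\ge 0$ and note the two sides are polynomials in $a$ of degree $\le n$ agreeing at infinitely many points, hence equal for all $a\in\mathbb Z$; then fix an arbitrary $a\in\mathbb Z$ and repeat the argument in the variable $b$.

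For $a,b\in\mathbb Z_{\ge 0}$ I would use the negative binomial expansions
\[
\frac{1}{(1-x)^{a+1}}=\sum_{i\ge 0}\binom{i+a}{i}x^i,\qquad \frac{1}{(1-x)^{b+1}}=\sum_{j\ge 0}\binom{j+b}{j}x^j
\]
in $\mathbb Q[[x]]$. Since the product of these two series equals $\frac{1}{(1-x)^{a+b+2}}$, comparing the coefficient of $x^n$ computed in two ways, namely as the Cauchy product of the two series on the one hand and via the negative binomial expansion with exponent $a+b+2$ on the other, gives $\sum_{i=0}^n\binom{i+a}{i}\binom{n-i+b}{n-i}=\binom{n+a+b+1}{n}$, which is the claim. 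Equivalently one can argue bijectively: $\binom{n+a+b+1}{n}$ counts weak compositions of $n$ into $a+b+2$ nonnegative parts, and splitting according to the total $i$ carried by the last $b+1$ parts produces the left-hand side after the reindexing $i\mapsto n-i$.

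The main obstacle, such as it is, is purely bookkeeping: making the polynomiality reduction airtight so that the case of negative $a,b$, where the convention makes $\binom{p}{k}$ a (possibly zero) polynomial value rather than a count, never has to be handled directly. A self-contained alternative avoiding generating functions would be induction on $n$ via Pascal's rule $\binom{n-i+b}{n-i}=\binom{n-i+b-1}{n-i-1}+\binom{n-i+b-1}{n-i}$, which rewrites the sum $S(n,a,b)$ (the left-hand side) as $S(n-1,a,b)+S(n,a,b-1)$ and matches the Pascal splitting of the right-hand side; but this requires a secondary induction, or the hockey-stick identity $\sum_{i=0}^{n}\binom{i+a}{i}=\binom{n+a+1}{n}$ as a base case, and is messier than the route above.
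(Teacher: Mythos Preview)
Your proof is correct and takes a genuinely different route from the paper's. The paper applies Pascal's rule to obtain the recursion $A_n(a,b)=A_n(a-1,b)+A_{n-1}(a,b)$ and then runs a two-stage double induction, first on $n$ and $|b|$ to establish $A_n(0,b)=\binom{n+b+1}{n}$, then on $n$ and $|a|$ for the general case; the induction on absolute value is what lets the argument reach negative integers directly. You instead observe that both sides are polynomials of degree at most $n$ in each of $a$ and $b$, reduce to the nonnegative case, and settle that by the one-line Cauchy product $(1-x)^{-(a+1)}(1-x)^{-(b+1)}=(1-x)^{-(a+b+2)}$. Your argument is shorter and makes the convolution structure of the identity transparent; the paper's is more elementary in that it never leaves Pascal's rule, but the bidirectional induction needed for negative $a,b$ is precisely the ``messier'' alternative you mention at the end.
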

\begin{proof} Note that
$$\binom{p}{k}=\binom{p-1}{k}+\binom{p-1}{k-1}.$$
By denoting the left-hand side of the above equation by $A_n(a,b)$,
we have
$$A_n(a,b)=A_n(a-1,b)+A_{n-1}(a,b).$$

First we argue by induction on $n$ and $|b|$ to prove
$$A_n(0,b)=\binom{n+b+1}{n}.$$
Then we argue by induction on $n$ and $|a|$ to prove
$$A_n(a,b)=\binom{n+a+b+1}{n}.$$
\end{proof}

We now prove two lemmas that will serve as base cases for our
inductive arguments.

\begin{lemma}
Let $a,b\in\mathbb Z$ and $k\geq 2g-3+a+b$. Then
\begin{enumerate}
\item[i)] $$\left[L_g^{a,b}(y,x)\right]_{y^k}=0,$$

\item[ii)]
$$\left[L_g^{a,b}(y,x)\right]_{y^{2g-4+a+b}x^{g+1}}=
\frac{(-1)^b(2g-2+a+b)}{4^g (2g+1)!!}.$$
\end{enumerate}
\end{lemma}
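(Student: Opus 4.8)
Since $n=1$, the functions entering $L_g^{a,b}(y,x)$ are only the one-point function $F(\pm y)=(\pm y)^{-2}\exp((\pm y)^3/24)$ and the two-point function $F(\pm y,x)=\exp(((\pm y)^3+x^3)/24)\,G(\pm y,x)$, both known explicitly. My first step is to pass to the generating series $\Phi(y,x):=\sum_{g\ge0}L_g^{a,b}(y,x)$; since the $g,g'$ sums form a Cauchy product,
\[
\Phi(y,x)=(y+x)^a(-y)^b\,F(y,x)F(-y)+y^a(x-y)^b\,F(y)F(-y,x).
\]
Substituting the closed forms and using $G(x,y)=\tfrac{1}{x+y}\sum_{k\ge0}\tfrac{k!}{(2k+1)!}(\tfrac{1}{2}xy(x+y))^k$, the cubic-in-$y$ exponentials cancel in each term and only the common factor $e^{x^3/24}$ survives, giving
\[
\Phi(y,x)=e^{x^3/24}\left[(-1)^b y^{b-2}(y+x)^{a-1}\sum_{k\ge0}\frac{k!}{(2k+1)!}\Bigl(\tfrac{xy(x+y)}{2}\Bigr)^k+y^{a-2}(x-y)^{b-1}\sum_{k\ge0}\frac{k!}{(2k+1)!}\Bigl(\tfrac{-xy(x-y)}{2}\Bigr)^k\right].
\]

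Next I would reduce both parts to one coefficient extraction. Each summand defining $L_g^{a,b}(y,x)$ is homogeneous of total degree $3g-3+a+b$ in $x,y$, so $L_g^{a,b}(y,x)$ is the homogeneous degree-$(3g-3+a+b)$ part of $\Phi$, and $\bigl[L_g^{a,b}(y,x)\bigr]_{y^k}=c\,x^{\,3g-3+a+b-k}$ for a single constant $c$. Thus (i) and (ii) together amount to: the coefficient of $y^{\,3g-3+a+b-m}x^m$ in $\Phi$ vanishes for $0\le m\le g$ and equals $\tfrac{(-1)^b(2g-2+a+b)}{4^g(2g+1)!!}$ for $m=g+1$. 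Expanding the two inner series with the binomial theorem, the coefficient of $x^q$ in the bracketed expression is
\[
(-1)^b\sum_{k\ge0}\frac{k!}{2^k(2k+1)!}\left(\binom{k+a-1}{q-k}+(-1)^{k-1-q}\binom{k+b-1}{q-k}\right)y^{\,3k+a+b-3-q},
\]
and multiplication by $e^{x^3/24}=\sum_{j\ge0}\tfrac{x^{3j}}{24^jj!}$ only shifts $x$-degrees by multiples of $3$.

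Matching the monomial $y^{\,3g-3+a+b-m}x^m$ then forces the summand coming from the $j$-th term of the exponential to have $k=g-j$, and nonvanishing of the binomials forces $q-k=m-g-2j\ge0$, i.e.\ $2j\le m-g$. For $0\le m\le g$ this inequality forces $j=0$, $m=g$, $k=q=g$, where the bracket equals $(-1)^b\bigl(\binom{g+a-1}{0}+(-1)^{-1}\binom{g+b-1}{0}\bigr)=(-1)^b(1-1)=0$; and $m<g$ leaves no contributing summand at all, proving (i). For $m=g+1$ only $j=0$, $k=g$, $q=g+1$ survives, with contribution $(-1)^b\tfrac{g!}{2^g(2g+1)!}\bigl(\binom{g+a-1}{1}+\binom{g+b-1}{1}\bigr)=(-1)^b\tfrac{g!}{2^g(2g+1)!}(2g+a+b-2)$; since $(2g+1)!=2^g g!\,(2g+1)!!$ this is $\tfrac{(-1)^b(2g-2+a+b)}{4^g(2g+1)!!}$, which is (ii).

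The computation is mechanical once $\Phi$ is known, so the main obstacle is producing the closed form correctly: keeping all signs straight in $(-y)^b$, $(x-y)^b$ and $G(-y,x)$, verifying the pairwise cancellation of the cubic exponentials, and — if one wants the statement for all integer $a,b$ rather than just the values $0,2$ needed in Proposition 3.1 — expanding $(y+x)^{a-1}$ and $(x-y)^{b-1}$ consistently in $\mathbb Q[x][[y,y^{-1}]]$ with $\deg y$ bounded above, which is what legitimises the matching conditions $k=g-j$, $2j\le m-g$.
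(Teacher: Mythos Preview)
Your argument is correct and follows essentially the same route as the paper: both reduce to the explicit formula for the two-point function $G(y,x)$ after observing that the cubic exponentials in $y$ cancel between the one- and two-point factors, leaving only the harmless $e^{x^3/24}$. Where the paper simply asserts that ``it is not difficult to see'' one may replace $F_g$ by $G_g$ and that the remaining claims ``follow easily from the explicit formula of $G(y,x)$,'' you carry out both steps explicitly---tracking the $e^{x^3/24}$ factor and showing via the constraint $2j\le m-g$ that it contributes nothing, then doing the binomial bookkeeping---so your write-up is in fact more complete than the paper's sketch.
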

\begin{proof}
Here we recall the definition of normalized $n$-point functions
$$G(x_1,\dots,x_n)=\exp\left(\frac{-\sum_{j=1}^n
x_j^3}{24}\right)\cdot F(x_1,\dots,x_n).$$ In particular, we have
$$G(x)=\frac{1}{x^2},\qquad G(x,y)=\frac{1}{x+y}\sum_{k\geq0}\frac{k!}{(2k+1)!}\left(\frac{1}{2}xy(x+y)\right)^k.$$
By definition
\begin{multline*}
\sum_{g\geq 0} L_g^{a,b}(y,x_1\dots,x_n)\\
=\exp\left(\frac{\sum_{j=1}^n
x_j^3}{24}\right)\sum_{\underline{n}=I\coprod J} (y+\sum_{i\in
I}x_i)^a (-y+\sum_{i\in J}x_i)^b G(y,x_I)G(-y,x_J),
\end{multline*}

So for statements (i) and (ii), it is not difficult to see that we
only need to prove
$$\left[y^{a-2}(-y+x)^b G_g(-y,x)+(-y)^{b-2}(y+x)^a G_g(y,x)\right]_{y^k}=0,\quad\text{for }k\geq 2g-3+a+b,$$
and
$$\left[y^{a-2}(-y+x)^b G_g(-y,x)+(-y)^{b-2}(y+x)^a G_g(y,x)\right]_{y^{2g-4+a+b} x^{g+1}}=\frac{(-1)^b(2g-2+a+b)}{4^g (2g+1)!!}.$$
Both follow easily from the explicit formula of $G(y,x)$.
\end{proof}

\begin{lemma}
Let $a,b\in\mathbb Z$ and $k\geq a+b-3$. Then
\begin{enumerate}
\item[i)] $$\left[L_0^{a,b}(y,x_1,\dots,x_n)\right]_{y^k}=0,$$

\item[ii)]
$$\left[L_0^{a,b}(y,x_1,\dots,x_n)\right]_{y^{a+b-4}\prod_{j=1}^nx_j}=
\frac{(-1)^b(a+b+n-3)!}{(a+b-3)!}.$$
\end{enumerate}
\end{lemma}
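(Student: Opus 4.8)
The plan is to reduce everything to genus $0$ $n$-point functions, which by the formula quoted in the excerpt are completely explicit: $F_0(x_1,\dots,x_n) = (x_1+\cdots+x_n)^{n-3}$. Unwinding the definition of $L_0^{a,b}$, we have
\begin{equation*}
L_0^{a,b}(y,x_1,\dots,x_n) = \sum_{\underline{n}=I\coprod J} \Bigl(y+\sum_{i\in I}x_i\Bigr)^a \Bigl(-y+\sum_{i\in J}x_i\Bigr)^b F_0(y,x_I) F_0(-y,x_J),
\end{equation*}
where $F_0(y,x_I) = (y + \sum_{i\in I}x_i)^{|I|-2}$ and $F_0(-y,x_J) = (-y + \sum_{i\in J}x_i)^{|J|-2}$. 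Writing $u = \sum_{i\in I}x_i$ and $v = \sum_{i\in J}x_i$ (so $u+v = \sum_{j=1}^n x_j =: s$), a typical summand becomes $(y+u)^{a+|I|-2}(-y+v)^{b+|J|-2}$. First I would handle part (i): I want to show the coefficient of $y^k$ vanishes for $k \geq a+b-3$. The key observation is that $(y+u)^{a+|I|-2}(v-y)^{b+|J|-2}$, expanded in $y$, has top $y$-degree exactly $(a+|I|-2)+(b+|J|-2) = a+b+n-4$ when both exponents are $\geq 0$; the subtlety is that these exponents can be negative, forcing a treatment of $(y+u)^{-m}$ as a power series $\sum_j \binom{-m}{j} y^j u^{-m-j}$ — but then $\deg_y$ is unbounded, so the "$\deg y < \infty$" convention and the bookkeeping of which side contributes the pole must be done carefully.

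The cleanest route is induction on $n$ using the string equation $F(x_1,\dots,x_n,0) = (\sum x_j)F(x_1,\dots,x_n)$, equivalently $F_0(x_1,\dots,x_n,0) = s \cdot F_0(x_1,\dots,x_{n-1})$ applied with $x_n \to 0$ on appropriate factors. Setting one variable $x_n = 0$ in $L_0^{a,b}(y,x_1,\dots,x_n)$ and splitting the sum over $I \coprod J$ according to whether $n \in I$ or $n \in J$, the string equation should yield a recursion expressing $\big[L_0^{a,b}(y,x_1,\dots,x_n)|_{x_n=0}\big]$ in terms of $L_0^{a\pm 1, b}$ and $L_0^{a,b\pm 1}$ at $n-1$ variables, with the exponent shift $a+b \to a+b+1$ matching the inequality $k \geq a+b-3$. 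The base case $n=2$ (or even $n=1$, where $L_0$ vanishes for degree reasons, and $n=2$ where $F_0(x,y)=(x+y)^{-1}$ and the sum over $I\coprod J$ is over the two singletons) can be checked by direct expansion; part (i) for $n=2$ reduces to a binomial identity that is essentially Lemma 4.1. For part (ii), I would track, along the same induction, the coefficient of $y^{a+b-4}\prod_{j=1}^n x_j$ — the lowest nonvanishing $y$-power — and show it satisfies the recursion whose solution is $(-1)^b (a+b+n-3)!/(a+b-3)!$; the ratio of consecutive factorials $(a+b+n-3)!/(a+b-3)!$ is precisely what a "multiply by $s$, pick up one more linear factor" step produces, and the sign $(-1)^b$ comes from the $(-y)^b$ factor and is recursion-stable.

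The main obstacle, I expect, is the careful handling of the cases where $a+|I|-2 < 0$ or $b+|J|-2 < 0$ (which happen when $a \leq 0$ or $b\leq 0$ and $I$ or $J$ is a small set, e.g. a singleton), since then the relevant factor is a genuine power series in $y$ with infinitely many terms and one must argue that, after summing over all the set partitions and combining with Lemma 4.1, the coefficients of $y^k$ for $k \geq a+b-3$ still cancel. Concretely, in the $n=2$ base case with $I=\{1\}, J=\{2\}$ one gets terms like $(y+x_1)^{a-1}(x_2-y)^{b-1}$ plus the symmetric term $(y+x_2)^{b-1}(x_1-y)^{a-1}$ (note the asymmetry in how $a,b$ attach), and verifying the cancellation of high $y$-powers is exactly an instance of $\sum_i \binom{i+\alpha}{i}\binom{n-i+\beta}{n-i} = \binom{n+\alpha+\beta+1}{n}$ with a vanishing right-hand side in the relevant range. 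Once the $n=2$ case is pinned down and the string-equation recursion is set up, the induction is routine, so I would spend most of the writeup making the $n=2$ computation and the recursion bookkeeping airtight.
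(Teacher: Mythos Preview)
Your approach is correct and essentially matches the paper's: induction on $n$ via the string equation. However, the paper streamlines your plan in two places. First, the recursion at $x_n=0$ is cleaner than you anticipate: summing the $n\in I$ and $n\in J$ contributions gives $L_0^{a+1,b}+L_0^{a,b+1}$ at $n-1$ variables, but since $(y+u)+(-y+v)=\sum_{j}x_j$, these combine to exactly $\bigl(\sum_j x_j\bigr)\,L_0^{a,b}$---no exponent shift survives. Second, for part (i) the paper observes that every term of $L_0^{a,b}$ has total degree $a+b+n-4$ in $(y,x_1,\dots,x_n)$, so any monomial $y^k\prod x_j^{d_j}$ with $k\ge a+b-3$ forces some $d_j=0$; this immediately reduces (i) to the $x_n=0$ case and dissolves your negative-exponent worries (the formal identity $(y+u)^{p+1}(-y+v)^q+(y+u)^p(-y+v)^{q+1}=(u+v)(y+u)^p(-y+v)^q$ holds for all integer $p,q$ in the ring $\mathbb Q[x_1,\dots,x_n][[y,y^{-1}]]$ with the stated $\deg y<\infty$ convention). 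For part (ii), the paper computes the coefficient directly via the Vandermonde-type convolution of Lemma~4.1 rather than by induction---though your inductive route would also work. A small slip: in your $n=2$ sketch the ``symmetric term'' should be $(y+x_2)^{a-1}(-y+x_1)^{b-1}$, since $a$ always sits on the $I$-side and $b$ on the $J$-side; and $L_0$ at $n=1$ does not vanish trivially but is the $g=0$ case of Lemma~4.2.
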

\begin{proof} Since
$$F_0(x_1,\dots,x_n)=(x_1+\cdots+x_n)^{n-3},$$
we have by definition
$$
L_0^{a,b}(y,x_1\dots,x_n) =\sum_{\underline{n}=I\coprod J}
(y+\sum_{i\in I}x_i)^{|I|-2+a} (-y+\sum_{i\in J}x_i)^{|J|-2+b}.
$$
For any monomial $y^k\prod_{j=1}^n x_j^{d_j}$ in
$L_0^{a,b}(y,x_1\dots,x_n)$, if $k\geq a+b-3$, then there must be
some $d_j=0$. We may assume $d_n=0$, then
\begin{multline*}
L_0^{a,b}(y,x_1\dots,x_{n-1},0)\\
=\sum_{\{1,\dots,n-1\}=I\coprod J}\left((y+\sum_{i\in
I}x_i)^{|I|-1+a}(-y+\sum_{i\in J}x_i)^{|J|-2+b}\right.\\
\left.+(y+\sum_{i\in I}x_i)^{|I|-2+a}(-y+\sum_{i\in J}x_i)^{|J|-1+b}\right)\\
=\left(\sum_{j=1}^{n-1} x_j\right)\sum_{\{1,\dots,n-1\}=I\coprod
J}(x_1+\sum_{i\in I}x_i)^{|I|-2+a}(-x_1+\sum_{i\in
J}x_i)^{|J|-2+b}\\
=\left(\sum_{j=1}^{n-1} x_j\right)L_0^{a,b}(y,x_1\dots,x_{n-1}).
\end{multline*}

So (i) follows by induction on $n$. By applying Lemma 4.1 we have

\begin{align*}
&\left[L_0^{a,b}(y,x_1,\dots,x_n)\right]_{y^{a+b-4}\prod_{j=1}^nx_j}\\
&=(-1)^b\sum_{|I|=0}^n\binom{|I|-2+a}{|I|}|I|!\binom{|j|-2+b}{|J|}|J|!\binom{n}{|I|}\\
&=(-1)^b n!\sum_{i=0}^n\binom{i-2+a}{i}\binom{n-i-2+b}{n-i}\\
&=(-1)^b n! \binom{a+b+n-3}{n}\\ &= \frac{(-1)^b
(a+b+n-3)!}{(a+b-3)!}.
\end{align*}

So we have proved (ii).
\end{proof}

\begin{theorem}
Let $a,b\in\mathbb Z$ and $k\geq 2g-3+a+b$. Then
$$\left[L_g^{a,b}(y,x_1\dots,x_n)\right]_{y^k}=0.$$
\end{theorem}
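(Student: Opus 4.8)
The plan is to prove the vanishing statement by induction on the genus $g$, using Lemma 4.2 (the one-variable case, which contains both the positive-genus base case and... actually Lemma 4.2 handles $n=1$ for all $g$) and Lemma 4.3 (the genus-zero case for all $n$) as the base cases of a double induction on $(g,n)$. The engine driving the induction is Proposition 2.3, which expresses $F_g$ in terms of $F_{g-1}$ and products $F_{g'}\otimes F_{g-g'}$ of lower total complexity. The idea is that $L_g^{a,b}$ is built out of such products $F_{g'}(y,x_I)F_{g-g'}(-y,x_J)$, so applying the recursion to one of the factors will express $[L_g^{a,b}]_{y^k}$ in terms of coefficients of other $L$-type expressions $[L_{g''}^{a',b'}]_{y^{k'}}$ with either smaller genus, or the same genus but strictly fewer marked points, and with the shifted exponents still satisfying the hypothesis $k' \geq 2g''-3+a'+b'$.

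Concretely, I would first set $S_I = \sum_{i\in I}x_i$, and use Proposition 2.3 applied with the variable set $\{y\}\cup I$ (so the "number of points" there is $|I|+1$) to rewrite $(y+S_I)^2 F_{g'}(y,x_I)$ — or rather $F_{g'}(y,x_I)$ itself multiplied by the appropriate power of $(y+S_I)$ — as
$$
(2g'+|I|)\,F_{g'}(y,x_I) = \frac{(y+S_I)^3}{12}F_{g'-1}(y,x_I) + \frac{1}{2(y+S_I)}\sum_{g''}\sum_{I = I'\coprod I''}(y+S_{I'})^2(y+S_{I''})^2 F_{g''}(y,x_{I'})F_{g'-g''}(x_{I''}).
$$
Wait — the recursion in Proposition 2.3 is symmetric in all its variables, so when I split $\{y\}\cup I$ into two parts, $y$ lands in exactly one of them; after plugging into the definition of $L_g^{a,b}$ and re-summing over all the resulting set-partitions, the terms reorganize into a linear combination of $[L_{g-1}^{a+3,b}]_{y^{k-3}}$-type coefficients (from the $F_{g-1}$ term, after absorbing $(y+S_I)^3$), terms where $y$ sits alone — giving $F_0(y) = y^{-2}$ factors that shift $a$ or $b$ — and genuinely lower-point $L$-coefficients. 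One must track that the weight condition $k \geq 2g-3+a+b$ is preserved under all these shifts: raising $a$ by $3$ and lowering both $g$ and $k$... $2(g-1)-3+(a+3)+b = 2g-3+a+b \leq k$, but the exponent dropped to $k-3$, so one needs $k-3 \geq 2(g-1)-3+(a+3)+b$, i.e. $k \geq 2g+a+b$, which is NOT always implied by $k\geq 2g-3+a+b$. This is the subtle point: the recursion lowers the $y$-degree by $3$ while the degree bound only relaxes by $2$, so there is a boundary range $k \in \{2g-3+a+b, 2g-2+a+b, 2g-1+a+b\}$ that the naive induction does not cover, and I expect handling this gap to be the main obstacle.

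The way I would resolve the gap is to prove the statement for all $k$ simultaneously by inducting not on individual coefficients but on the whole tail $\sum_{k \geq 2g-3+a+b}[L_g^{a,b}]_{y^k}$, i.e. work with the truncated generating function in $y$ and show the recursion closes as an identity of formal series in $\mathbb{Q}[x_1,\dots,x_n][[y,y^{-1}]]$ with the stated support condition; alternatively, supplement Lemmas 4.2 and 4.3 so that the base cases already cover a wide enough band of $(g,n,k)$ that the induction can step by $3$ in $k$ rather than by $1$. A cleaner route may be to combine the two halves of the recursion: since the product terms come in the symmetric pair $F_{g'}(y,x_I)F_{g-g'}(-y,x_J)$, applying the recursion to the $F_{g'}(y,\cdot)$ factor and separately to the $F_{g-g'}(-y,\cdot)$ factor and averaging causes the problematic leading-$y$ contributions to cancel against each other — exactly the mechanism already visible in Lemma 4.3's proof, where $L_0^{a,b}(y,x_1,\dots,x_{n-1},0) = (\sum x_j) L_0^{a,b}(y,x_1,\dots,x_{n-1})$ and the string-equation substitution $x_n = 0$ is what forces some $d_j = 0$ and hence kills the top coefficients. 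So I would try to run the induction via the substitution $x_n \mapsto 0$ as in Lemma 4.3, reducing $n$ to $n-1$, and use Proposition 2.3 only to handle the residual dependence, keeping the genus induction and the point induction cleanly separated. The verification that all exponent shifts respect $k' \geq 2g'-3+a'+b'$ under this scheme — combined with checking the finitely many boundary values of $k$ directly from the explicit one- and two-point formulas — is the technical heart, and is where I would expect to spend most of the effort.
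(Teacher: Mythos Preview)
Your overall architecture matches the paper's: double induction on $(g,n)$ with Lemmas~4.2 and~4.3 as base cases, Proposition~2.3 as the inductive engine, and applying the recursion to the two factors of $L_g^{a,b}$ symmetrically. You also correctly isolate the central difficulty --- after the recursion one is left needing $[L^{a+3,b}_{g-1}+L^{a,b+3}_{g-1}]_{y^k}=0$ for $k\geq 2g-3+a+b$, while the inductive hypothesis applied termwise only gives vanishing for $k\geq 2g-2+a+b$. (Your bookkeeping here is slightly off: the $y$-exponent does not drop to $k-3$; the factor $(y+S_I)^3$ is absorbed into the $a$-weight, so one still extracts $y^k$ from $L^{a+3,b}_{g-1}$, and the gap in the threshold is exactly $1$, not $3$.)

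What you are missing is the specific mechanism that closes this gap; none of your four suggestions quite finds it. The key is the elementary identity
\[
\Bigl(\sum_{j=1}^n x_j\Bigr)\,L^{p,q}_{g'}(y,x_1,\dots,x_n)=L^{p+1,q}_{g'}(y,x_1,\dots,x_n)+L^{p,q+1}_{g'}(y,x_1,\dots,x_n),
\]
which follows immediately from $(y+\sum_{i\in I}x_i)+(-y+\sum_{i\in J}x_i)=\sum_j x_j$. Iterating it gives
\[
L^{a+3,b}_{g-1}+L^{a,b+3}_{g-1}
=\Bigl(\sum_j x_j\Bigr)^{3} L^{a,b}_{g-1}\;-\;3\Bigl(\sum_j x_j\Bigr)\,L^{a+1,b+1}_{g-1},
\]
and now the induction hypothesis at genus $g-1$ \emph{does} apply to each piece on the right: $[L^{a,b}_{g-1}]_{y^k}=0$ already for $k\geq 2g-5+a+b$, and $[L^{a+1,b+1}_{g-1}]_{y^k}=0$ for $k\geq 2g-3+a+b$. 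So the point is not that leading-$y$ terms cancel between the two halves, but that the \emph{sum} can be traded for $L$'s with strictly smaller total weight $a'+b'$, where the threshold is low enough. A second point you gloss over: before reaching this step the paper restricts to monomials with every $d_j\geq 1$; this is what kills the residual $g'=0$, $s=0$ cross-term $(\sum_{i\in I} x_i)^{|I|-1}L^{a+1,b}_g(y,x_J)$ (its $x_I$-degree is too small for all exponents to be positive), and only afterward is the case of some $d_j=0$ handled via the string-equation identity $L^{a,b}_g(y,x_1,\dots,x_{n-1},0)=(\sum_j x_j)L^{a,b}_g(y,x_1,\dots,x_{n-1})$. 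Your option~4 is thus the \emph{last} step of the argument, not its engine.
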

\begin{proof}  We will argue by induction on $g$ and
$n$, since the theorem holds for $g=0$ or $n=1$ as proved in the
above lemmas. We have
\begin{multline*}
(2g+n)L_g^{a,b}(y,x_1\dots,x_n)\\
=\sum_{g'=0}^g\sum_{\underline{n}=I\coprod J} (y+\sum_{i\in I}x_i)^a
(-y+\sum_{i\in J}x_i)^b
(2g'+|I|)F_{g'}(y,x_I)F_{g-g'}(-y,x_J)\\
+\sum_{g'=0}^g\sum_{\underline{n}=I\coprod J} (y+\sum_{i\in I}x_i)^a
(-y+\sum_{i\in J}x_i)^b F_{g'}(y,x_I)(2g-2g'+|J|)F_{g-g'}(-y,x_J).
\end{multline*}

Substituting $F_{g'}(y,x_I)$ by Propostion 2.3,
\begin{multline*}
\left[\sum_{g'=0}^g\sum_{\underline{n}=I\coprod J} (y+\sum_{i\in
I}x_i)^a (-y+\sum_{i\in J}x_i)^b
(2g'+|I|)F_{g'}(y,x_I)F_{g-g'}(-y,x_J)\right]_{y^k}\\
=\frac{1}{12}\left[L^{a+3,b}_{g-1}(y,x_1,\dots,x_n)\right]_{y^k}
+\left[\sum_{g'=0}^g\sum_{s\geq0}\binom{a-1}{s}\sum_{\underline{n}=I\coprod
J}F_{g'}(x_I)(\sum_{i\in
I}x_i)^{s+2}L^{a+1-s,b}_{g-g'}(y,x_J)\right]_{y^k}.
\end{multline*}

Note that in the last term of the above equation, $|J|<n$. So by
induction, for $k\geq 2g-3+a+b$, the sums vanish except for $g'=0$
and $s=0$, namely the term
$$\left[\sum_{\underline{n}=I\coprod
J}(\sum_{i\in I}x_i)^{|I|-1} L^{a+1,b}_{g}(y,x_J)\right]_{y^k}.$$

Let $d_j\geq 1$ for $1\leq j\leq n$. By induction, it is not
difficult to see from the above that
\begin{multline*}
(2g+n)\left[L_g^{a,b}(y,x_1\dots,x_n)\right]_{y^k\prod_{j=1}^n
x_j^{d_j}}
\\=\frac{1}{12}\left[L^{a+3,b}_{g-1}(y,x_1,\dots,x_n)+
L^{a,b+3}_{g-1}(y,x_1,\dots,x_n)\right]_{y^k\prod_{j=1}^nx_j^{d_j}}.
\end{multline*}
By induction, we have
\begin{align*}
0&=\left(\sum_{j=1}^n x_j\right)
\left[L^{a+1,b+1}_{g-1}(y,x_1,\dots,x_n)\right]_{y^k}\quad
\text{for }k\geq 2g-3+a+b\\
&=\left[L^{a+2,b+1}_{g-1}(y,x_1,\dots,x_n)+L^{a+1,b+2}_{g-1}(y,x_1,\dots,x_n)\right]_{y^k}
\end{align*}
and
\begin{align*}
0=&\left(\sum_{j=1}^{n} x_j\right)^3
\left[L^{a,b}_{g-1}(y,x_1,\dots,x_n)\right]_{y^k}\quad
\text{for }k\geq 2g-5+a+b\\
=&\left[L^{a+3,b}_{g-1}(y,x_1,\dots,x_n)+L^{a,b+3}_{g-1}(y,x_1,\dots,x_n)\right]_{y^k}\\
&+3\left[L^{a+2,b+1}_{g-1}(y,x_1,\dots,x_n)+
L^{a+1,b+2}_{g-1}(y,x_1,\dots,x_n)\right]_{y^k}\\
=&\left[L^{a+3,b}_{g-1}(y,x_1,\dots,x_n)+L^{a,b+3}_{g-1}(y,x_1,\dots,x_n)\right]_{y^k}.
\end{align*}
So we have proved that
$$\left[L_g^{a,b}(y,x_1\dots,x_n)\right]_{y^k\prod_{j=1}^n x_j^{d_j}}=0, \quad \text{for }d_j\geq 1.$$
If some $d_j$ is zero, the above identity still holds by applying
the string equation
$$L_g^{a,b}(y,x_1\dots,x_n,0)=\left(\sum_{j=1}^n x_j\right)L_g^{a,b}(y,x_1\dots,x_n).$$
So we proved the theorem.
\end{proof}

\begin{theorem}
Let $a,b\in\mathbb Z$, $d_j\geq 1$ and $\sum_{j}d_j=g+n$. Then
$$\left[L_g^{a,b}(y,x_1\dots,x_n)\right]_{y^{2g-4+a+b}\prod_{j=1}^nx_j^{d_j}}=
\frac{(-1)^b(2g-3+n+a+b)!}{4^g(2g-3+a+b)!\prod_{j=1}^n(2d_j-1)!!}.$$
\end{theorem}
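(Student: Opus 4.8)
The plan is to prove Theorem 4.5 by a double induction on $g$ and $n$, running in parallel with (and reusing) Theorem 4.4, exactly as in the proof of Theorem 4.4. The base cases $g=0$ (any $n$) and $n=1$ (any $g$) are furnished by Lemma 4.3(ii) and Lemma 4.2(ii) respectively: in each case the stated constant matches — for $n=1$ one checks $\frac{(-1)^b(2g-2+a+b)!}{4^g(2g-1+a+b)!!}$ against $\frac{(-1)^b(2g-2+a+b)}{4^g(2g+1)!!}$ using $d_1=g+1$ so $\sum d_j=g+1=g+n$, and $(2g+1)!!=(2g+1)!!$; for $g=0$ one uses $\sum d_j = n$, forcing all $d_j=1$, and Lemma 4.3(ii) gives precisely $\frac{(-1)^b(a+b+n-3)!}{(a+b-3)!}$. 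With the bases in hand, I would assume the theorem for all smaller $g$, and for the same $g$ with fewer points.

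The inductive step would start from the same identity used in Theorem 4.4: multiply $L_g^{a,b}$ by $(2g+n)$, split each $F_{g'}(y,x_I)$ and $F_{g-g'}(-y,x_J)$ by the weight they carry, and substitute Proposition 2.3 for $F_{g'}(y,x_I)$ (and symmetrically for $F_{g-g'}(-y,x_J)$). This produces a $\frac{1}{12}L_{g-1}^{a+3,b}$ term, a symmetric $\frac{1}{12}L_{g-1}^{a,b+3}$ term, and a collection of ``splitting'' terms of the shape $\sum_{s\ge0}\binom{a-1}{s}\sum_{I\coprod J}F_{g'}(x_I)(\sum_{i\in I}x_i)^{s+2}L_{g-g'}^{a+1-s,b}(y,x_J)$ in which $|J|<n$. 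The key point is to extract the coefficient of $y^{2g-4+a+b}\prod x_j^{d_j}$ with all $d_j\ge1$: in each splitting term the $y$-degree needed from the $L_{g-g'}^{a+1-s,b}(y,x_J)$ factor is $2g-4+a+b$, while the threshold of Theorem 4.4 for that factor is $2(g-g')-3+(a+1-s)+b$; so unless $g'=0$ and $s=0$ (and also the $x_I$ block is ``trivial'', i.e.\ $|I|$ is as small as possible so that $F_0(x_I)(\sum x_i)^2$ contributes only $(\sum_{i\in I}x_i)^{|I|-1}$), Theorem 4.4 kills the term. After this collapse, exactly as on the displayed lines at the end of the Theorem 4.4 proof, the surviving splitting contribution telescopes against the left side and one is left with
\begin{multline*}
(2g+n)\left[L_g^{a,b}(y,x_1,\dots,x_n)\right]_{y^{2g-4+a+b}\prod x_j^{d_j}}\\
=\frac{1}{12}\left[L_{g-1}^{a+3,b}(y,x_1,\dots,x_n)+L_{g-1}^{a,b+3}(y,x_1,\dots,x_n)\right]_{y^{2g-4+a+b}\prod x_j^{d_j}}.
\end{multline*}
Now $2g-4+a+b=2(g-1)-4+(a+3)+b=2(g-1)-4+a+(b+3)$, and $\sum d_j=g+n=(g-1)+n+1$ — one short of the exponent sum $(g-1)+n$ required by Theorem 4.5 at genus $g-1$. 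This is handled precisely by the two auxiliary vanishing identities displayed in the Theorem 4.4 proof: multiplying the genus $g-1$ instance of Theorem 4.4 by $\sum_j x_j$ and by $(\sum_j x_j)^3$ and reading off coefficients shows $[L_{g-1}^{a+3,b}+L_{g-1}^{a,b+3}]$ at the relevant $(y,x)$-degree, modulo terms with some $x_j=0$, equals a sum over $k$ of $\big[L_{g-1}^{a+3,b}+L_{g-1}^{a,b+3}\big]_{y^{2(g-1)-4+a'+b'}\,x_k^{-1}\prod x_j^{d_j}}$-type pieces, i.e.\ it reduces to the genus $g-1$ case of Theorem 4.5 itself with one $d_k$ lowered. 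Concretely, one writes $[L_{g-1}^{a+3,b}]_{y^{2(g-1)-4+(a+3)+b}\prod x_j^{d_j}}$ as $\sum_{k}[L_{g-1}^{a+3,b}]$ with $d_k\mapsto d_k-1$ wherever $d_k\ge2$ (the $d_k=1$ slots drop out because decreasing them introduces an $x$-free variable, handled by the string equation $L_{g-1}^{a,b}(\dots,0)=(\sum x_j)L_{g-1}^{a,b}(\dots)$), apply the induction hypothesis of Theorem 4.5 to each, and sum the resulting explicit constants.

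The real computational heart — and the step I expect to be the main obstacle — is this last bookkeeping: showing that
$$\frac{1}{12(2g+n)}\sum_{k:\,d_k\ge2}\left(\frac{(-1)^b(2g-2+n+a+b)!\,(2d_k-1)}{4^{g-1}(2g+a+b)!\,\prod_j(2d_j-1)!!}+\frac{(-1)^{b+3}(2g-2+n+a+b)!\,(2d_k-1)}{4^{g-1}(2g+a+b)!\,\prod_j(2d_j-1)!!}\right)$$
(with the obvious shift $d_k\to d_k-1$ converting $(2d_j-1)!!$ to include an extra factor $(2d_k-1)$ in the denominator, i.e.\ the numerator factor $(2d_k-1)$ shown) collapses to $\frac{(-1)^b(2g-3+n+a+b)!}{4^g(2g-3+a+b)!\prod_j(2d_j-1)!!}$. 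The decisive arithmetic identity is $\sum_{k=1}^n(2d_k-1)=2\sum d_k-n=2(g+n)-n=2g+n$, which exactly cancels the $(2g+n)$ on the left; combined with $(2g-2+n+a+b)!\cdot(2g-1+n+a+b)\cdot\cdots$ telescoping of the factorials and the single factor of $4$ converting $4^{g-1}$ to $4^g$ (absorbing a $\tfrac1{12}\cdot\tfrac13$? — no: the $\tfrac1{12}$ together with a combinatorial factor $3$ coming from the two symmetric terms $a+3,b$ and $a,b+3$ must reproduce $\tfrac14$, so one checks $\tfrac1{12}\cdot 3 = \tfrac14$), everything balances. I would present the base cases and the inductive collapse of the splitting terms cleanly, then verify this final factorial/double-factorial identity by a direct one-line computation, being careful with the $d_k=1$ versus $d_k\ge2$ split and with the sign $(-1)^b=(-1)^{b+3}\cdot(-1)$ which, rather than cancelling, must be tracked so that the two symmetric contributions add with a relative sign determined by the parity of the shift in $b$; I expect this sign/parity reconciliation to be the one place where a naive calculation can go wrong.
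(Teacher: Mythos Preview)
Your framework is the paper's: double induction on $g$ and $n$, base cases from Lemmas 4.2(ii) and 4.3(ii), and reduction via Proposition 2.3 and Theorem 4.4 to
\[
(2g+n)\bigl[L_g^{a,b}\bigr]_{y^{2g-4+a+b}\prod x_j^{d_j}}
=\tfrac{1}{12}\bigl[L_{g-1}^{a+3,b}+L_{g-1}^{a,b+3}\bigr]_{y^{2g-4+a+b}\prod x_j^{d_j}}.
\]
The gap is the step \emph{after} this. You write ``$2g-4+a+b=2(g-1)-4+(a+3)+b$'', but the right side equals $2g-3+a+b$; so the $y$-exponent is one below what Theorem 4.5 at genus $g-1$ with indices $(a+3,b)$ or $(a,b+3)$ would require, and you cannot invoke the induction hypothesis on these two pieces separately. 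Worse, your own ``computational heart'' display shows why any such parallel treatment must fail: the two summands carry signs $(-1)^b$ and $(-1)^{b+3}=-(-1)^b$, so they cancel to zero rather than produce the target constant. You flag this sign issue at the end but do not resolve it; as written the argument closes to $0$.

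The missing move (and what the paper does) is to first pass to the \emph{symmetric} shift $(a+1,b+1)$. From the algebraic identity $(\sum_j x_j)^3 L_{g-1}^{a,b}=L_{g-1}^{a+3,b}+3L_{g-1}^{a+2,b+1}+3L_{g-1}^{a+1,b+2}+L_{g-1}^{a,b+3}$ together with Theorem 4.4 at genus $g-1$ (applicable since $2g-4+a+b\ge 2(g-1)-3+a+b$), one obtains
\[
\tfrac{1}{12}\bigl[L_{g-1}^{a+3,b}+L_{g-1}^{a,b+3}\bigr]
=-\tfrac14\bigl[L_{g-1}^{a+2,b+1}+L_{g-1}^{a+1,b+2}\bigr]
=-\tfrac14\bigl[(\textstyle\sum_j x_j)\,L_{g-1}^{a+1,b+1}\bigr]
\]
at $y^{2g-4+a+b}$. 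Now the exponent matches exactly, $2g-4+a+b=2(g-1)-4+(a+1)+(b+1)$; extracting $\prod x_j^{d_j}$ gives $\sum_k[L_{g-1}^{a+1,b+1}]$ with $d_k\mapsto d_k-1$ and $\sum d_j'=(g-1)+n$; and there is a single sign $(-1)^{b+1}$, which with the prefactor $-\tfrac14$ yields $(-1)^b/4$. Your identities $\sum_k(2d_k-1)=2g+n$ and $4\cdot4^{g-1}=4^g$ then close the induction cleanly. (To make the ``lower one $d_k$'' step legitimate, first reduce to $d_j\ge 2$ via the \emph{dilaton} equation, as the paper does; the string equation does not accomplish this.)
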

\begin{proof} By the dilaton equation, we may assume $d_j\geq 2$. As in the proof of the above theorem, we have
\begin{align*}
&(2g+n)\left[L_g^{a,b}(y,x_1\dots,x_n)\right]_{y^{2g-4+a+b}\prod_{j=1}^n x_j^{d_j}}\\
&=\frac{1}{12}\left[L^{a+3,b}_{g-1}(y,x_{\underline{n}})+
L^{a,b+3}_{g-1}(y,x_{\underline{n}})\right]_{y^{2g-4+a+b}\prod_{j=1}^n x_j^{d_j}}\\
&=-\frac{1}{4}\left[L^{a+2,b+1}_{g-1}(y,x_{\underline{n}})+
L^{a+1,b+2}_{g-1}(y,x_{\underline{n}})\right]_{y^{2g-4+a+b}\prod_{j=1}^n x_j^{d_j}}\\
&=-\frac{1}{4}\left[\left(\sum_{i=1}^n
x_i\right)L^{a+1,b+1}_{g-1}(y,x_{\underline{n}})\right]_{y^{2g-4+a+b}\prod_{j=1}^n x_j^{d_j}}\\
&=-\frac{1}{4}\sum_{j=1}^n\left[L^{a+1,b+1}_{g-1}(y,x_{\underline{n}})\right]_{y^{2g-4+a+b}x_j^{d_j-1}\prod_{i\neq
j} x_i^{d_i}}\\
&=\frac{(-1)^b(2g-3+n+a+b)!}{4^g(2g-3+a+b)!\prod_{j=1}^n(2d_j-1)!!}\sum_{j=1}^n(2d_j-1)\\
&=(2g+n)\frac{(-1)^b(2g-3+n+a+b)!}{4^g(2g-3+a+b)!\prod_{j=1}^n(2d_j-1)!!}.
\end{align*}

So we have proved the theorem.
\end{proof}

All the three statements in Proposition 3.1 are particular cases of
Theorems 2.4 and 2.5. We thus conclude the proof of the Faber
intersection number conjecture.

The following corollaries were stated as conjectures in our previous
paper \cite{LX1}.
\begin{corollary}
Let $d_j\geq1$ and $\sum_{j=1}^{n}(d_j-1)=g$. Then
\begin{multline*}
\frac{(2g-3+n)!}{2^{2g+1}(2g-3)!\prod_{j=1}^n(2d_j-1)!!}
=\langle\tau_{2g-2}\prod_{j=1}^n\tau_{d_j}\rangle_g-\sum_{j=1}^{n}
\langle\tau_{d_j+2g-3}\prod_{i\neq j}\tau_{d_i}\rangle_g\\
+\frac{1}{2}\sum_{\underline{n}=I\coprod
J}\sum_{j=0}^{2g-4}(-1)^j\langle\tau_{j}\prod_{i\in
I}\tau_{d_i}\rangle_{g'}\langle\tau_{2g-4-j}\prod_{i\in
J}\tau_{d_i}\rangle_{g-g'}.
\end{multline*}
\end{corollary}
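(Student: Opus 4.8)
The plan is to recognize Corollary 4.6 as the specialization $a=b=0$ of Theorem 4.5 combined with Theorem 4.4, translated back into intersection numbers via the same dictionary used in Corollary 3.2. Concretely, the hypothesis $\sum_j(d_j-1)=g$, i.e. $\sum_j d_j = g+n$, is exactly the degree condition appearing in Theorem 4.5 with $a=b=0$, while the exponent of $y$ in that theorem becomes $y^{2g-4}$. So the first step is to write out
$$\left[L_g^{0,0}(y,x_1,\dots,x_n)\right]_{y^{2g-4}\prod_j x_j^{d_j}} = \frac{(2g-3+n)!}{4^g(2g-3)!\prod_{j=1}^n(2d_j-1)!!},$$
and separately, by Theorem 4.4 with $a=b=0$, to note that $\left[L_g^{0,0}(y,x_1,\dots,x_n)\right]_{y^k}=0$ for all $k\geq 2g-3$ (this latter fact governs the vanishing of the higher tails, exactly as in the proof of Corollary 3.2(iii)).

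Next I would expand $\left[L_g^{0,0}(y,x_1,\dots,x_n)\right]_{y^{2g-4}\prod x_j^{d_j}}$ directly from the definition
$$L_g^{0,0}(y,x_1,\dots,x_n)=\sum_{g'=0}^g\sum_{\underline n = I\coprod J} F_{g'}(y,x_I)F_{g-g'}(-y,x_J),$$
reading off the coefficient of $y^{2g-4}\prod x_j^{d_j}$. As in the first displayed computation of Corollary 3.2, the coefficient of $y^p$ in $F_{g'}(y,x_I)$ contributes $\langle\tau_p\prod_{i\in I}\tau_{d_i}\rangle_{g'}$, and the coefficient of $y^q$ in $F_{g-g'}(-y,x_J)$ contributes $(-1)^q\langle\tau_q\prod_{i\in J}\tau_{d_i}\rangle_{g-g'}$ with $p+q=2g-4$. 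Summing over the split into $\tau_j$ and $\tau_{2g-4-j}$ gives the double-sum term in the corollary. The point is then to handle, exactly as in Corollary 3.2, the ``illegal'' indices: using the genus-zero conventions $\langle\tau_{-2}\rangle_0=1$ and $\langle\tau_k\tau_{-1-k}\rangle_0=(-1)^k$, the terms with $j<0$ or $2g-4-j<0$ reorganize into the single-point terms $\langle\tau_{2g-2}\prod\tau_{d_j}\rangle_g$ (the $j=-2$ contribution from an $I$ or $J$ of size $1$ in genus $0$) and $-\sum_j\langle\tau_{d_j+2g-3}\prod_{i\neq j}\tau_{d_i}\rangle_g$ (the $j=-1-k$ contributions). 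After moving those two terms to the right, what remains on the left is precisely the stated combination, and its value is the right-hand side of Theorem 4.5 specialized at $a=b=0$, which equals $\frac{(2g-3+n)!}{2^{2g+1}(2g-3)!\prod(2d_j-1)!!}$ since $4^g = 2^{2g}$ and there is a factor $\tfrac12$ out front.

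The bookkeeping of which illegal indices survive (and with what sign and multiplicity) is the main subtlety, since one must be careful that the $(-1)^j$ absorbed into the $-y$ argument of $F_{g-g'}(-y,x_J)$ matches the sign convention $\langle\tau_k\tau_{-1-k}\rangle_0=(-1)^k$ and that no double counting occurs when both $|I|=1$ in genus $0$ and $|J|=1$ in genus $0$ — but this is resolved identically to the $n=1$ bookkeeping already carried out in the proof of Corollary 3.2(i), just with the shift $2g-2\mapsto 2g-4$ in the $y$-exponent coming from $a+b=0$ rather than from the $L^{0,0}$ in Proposition 3.1(i). The hard part, therefore, is not any new idea but making the index translation watertight; the substance is already contained in Theorems 4.4 and 4.5 and in the argument of Corollary 3.2. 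One then remarks that this confirms the conjecture from \cite{LX1}.
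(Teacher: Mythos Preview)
Your approach is correct and is exactly the paper's: recognize the right-hand side as $\tfrac12\bigl[L_g^{0,0}(y,x_1,\dots,x_n)\bigr]_{y^{2g-4}\prod_j x_j^{d_j}}$ via the negative-index bookkeeping of Corollary~3.2, then read off the value from Theorem~4.5 with $a=b=0$. Two small remarks: Theorem~4.4 is not actually needed here (only Theorem~4.5 is invoked), and the $\langle\tau_{2g-2}\prod\tau_{d_j}\rangle_g$ term arises from $I$ or $J$ \emph{empty} in genus $0$ (the one-point function $F_0(y)=y^{-2}$), not of size~$1$.
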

\begin{proof}
Since the right hand side is just
$$\frac{1}{2}\left[L_g^{0,0}(y,x_1\dots,x_n)\right]_{y^{2g-4}\prod_{j=1}^nx_j^{d_j}},$$
the result follows from Theorem 4.5.
\end{proof}

\begin{corollary}
Let $g\geq2$, $d_j\geq1$ and $\sum_{j=1}^{n}(d_j-1)=g$. Then
\begin{multline*}
-\frac{(2g-2)!}{|B_{2g-2}|}\int_{\overline{\sM}_{g,n}}\psi_1^{d_1}\cdots\psi_n^{d_n}{\rm ch}_{2g-3}(\mathbb E)\\
=\frac{2g-2}{|B_{2g-2}|}\left(\int_{\overline{\sM}_{g,n}}\psi_1^{d_1}\cdots\psi_n^{d_n}\lambda_{g-1}\lambda_{g-2}-3\int_{\overline{\sM}_{g,n}}\psi_1^{d_1}\cdots\psi_n^{d_n}\lambda_{g-3}\lambda_{g}\right)\nonumber\\
=\frac{1}{2}\sum_{j=0}^{2g-4}(-1)^j\langle\tau_{2g-4-j}\tau_j\tau_{d_1}\cdots\tau_{d_n}\rangle_{g-1}+\frac{(2g-3+n)!}{2^{2g+1}(2g-3)!}\cdot\frac{1}{\prod_{j=1}^n(2d_j-1)!!}.
\end{multline*}
\end{corollary}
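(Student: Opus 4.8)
The plan is to establish the two equalities in the statement separately, reducing every quantity that appears to intersection numbers of $\psi$-classes by means of Mumford's Chern character formula, and then invoking Corollary 4.6 (a consequence of Theorem 4.5). Throughout, $\sum_j(d_j-1)=g$, so each class we cap against $\psi_1^{d_1}\cdots\psi_n^{d_n}$ lies in codimension $2g-3$ on $\overline{\mathcal M}_{g,n}$.

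For the first equality the key input is the class-level relation
$${\rm ch}_{2g-3}(\mathbb E)=-\frac{1}{(2g-3)!}\left(\lambda_{g-1}\lambda_{g-2}-3\lambda_g\lambda_{g-3}\right),$$
valid modulo tautological classes that vanish when capped with a monomial of $\psi$-classes of complementary degree. It follows from Mumford's relation $c(\mathbb E)c(\mathbb E^\vee)=1$, i.e.\ from ${\rm ch}_{2k}(\mathbb E)=0$ for $k\ge1$: Newton's identities express the power sums $p_{2l-1}=(2l-1)!\,{\rm ch}_{2l-1}(\mathbb E)$ as polynomials in the $\lambda_i$ once one imposes $p_{2l}=0$ and $\lambda_i=0$ for $i>g$, and in low genus this polynomial is already exactly $-\frac{1}{(2g-3)!}(\lambda_{g-1}\lambda_{g-2}-3\lambda_g\lambda_{g-3})$, while for larger $g$ the difference is a sum of $\lambda$-monomials (with three or more Chern factors) that pairs trivially with $\psi_1^{d_1}\cdots\psi_n^{d_n}$ in codimension $2g-3$. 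Integrating the displayed relation against $\psi^d$ and multiplying by $-(2g-2)!/|B_{2g-2}|$, and using $(2g-2)!/(2g-3)!=2g-2$, yields the first equality.

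For the second equality I would apply Mumford's formula from the introduction with $g$ replaced by $g-1$, cap with $\psi^d=\psi_1^{d_1}\cdots\psi_n^{d_n}$, and integrate. The term $\kappa_{2g-3}=\pi_*(\psi_{n+1}^{2g-2})$ contributes $\langle\tau_{2g-2}\prod_{j=1}^n\tau_{d_j}\rangle_g$; the term $\sum_i\psi_i^{2g-3}$ contributes $\sum_j\langle\tau_{d_j+2g-3}\prod_{i\ne j}\tau_{d_i}\rangle_g$; the nonseparating boundary divisor contributes $\frac12\sum_{j=0}^{2g-4}(-1)^j\langle\tau_{2g-4-j}\tau_j\prod_i\tau_{d_i}\rangle_{g-1}$; and the separating boundary divisors contribute $\frac12\sum_{\underline{n}=I\coprod J}\sum_{j=0}^{2g-4}(-1)^j\langle\tau_j\prod_{i\in I}\tau_{d_i}\rangle_{g'}\langle\tau_{2g-4-j}\prod_{i\in J}\tau_{d_i}\rangle_{g-g'}$. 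Of these four pieces, the first, second, and fourth are exactly the right-hand side of Corollary 4.6, hence sum to the closed form $\frac{(2g-3+n)!}{2^{2g+1}(2g-3)!\prod_j(2d_j-1)!!}$; together with the third piece this is precisely the last displayed expression in the corollary, up to the overall factor $B_{2g-2}/(2g-2)!$. Combining with the first equality (which already identifies $-(2g-2)!/|B_{2g-2}|$ times $\int\psi^d{\rm ch}_{2g-3}(\mathbb E)$ with the middle expression) gives the second equality, modulo matching the sign convention for $B_{2g-2}$.

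The step I expect to be the real obstacle is the vanishing invoked in the second paragraph: that, after capping with $\psi^d$ in codimension $2g-3$ with $\sum_j(d_j-1)=g$, the ``extra'' $\lambda$-monomials in the Newton expansion of ${\rm ch}_{2g-3}(\mathbb E)$ contribute nothing, so that $\psi_1^{d_1}\cdots\psi_n^{d_n}$ effectively detects only the $\lambda_{g-1}\lambda_{g-2}$- and $\lambda_g\lambda_{g-3}$-parts. I would attack this by running Mumford's formula once more (Faber's algorithm) to rewrite each such Hodge integral in terms of pure $\psi$-intersection numbers and then appealing to Theorems 4.4 and 4.5 — which contain Proposition 3.1 and hence the vanishing identities of Corollary 3.2 — to see the contributions cancel; alternatively, a direct socle argument in $\mathcal R^*(\overline{\mathcal M}_{g,n})$, if available, would be cleaner. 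A secondary nuisance is keeping track of the sign of $B_{2g-2}$ so that the factor $-(2g-2)!/|B_{2g-2}|$ in the statement matches the $B_{2g-2}/(2g-2)!$ produced by Mumford's formula.
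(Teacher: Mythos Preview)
Your overall architecture matches the paper exactly: use Mumford's identity expressing ${\rm ch}_{2g-3}(\mathbb E)$ in terms of $\lambda_{g-1}\lambda_{g-2}$ and $\lambda_g\lambda_{g-3}$ for the first equality, use Mumford's boundary expansion of ${\rm ch}_{2g-3}(\mathbb E)$ for the second, and then invoke Corollary~4.6. The paper simply quotes the first relation as a known formula of Mumford,
$$(2g-3)!\cdot{\rm ch}_{2g-3}(\mathbb E)=(-1)^{g-1}\bigl(3\lambda_{g-3}\lambda_g-\lambda_{g-1}\lambda_{g-2}\bigr),$$
after which the corollary is immediate.

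The one real gap in your write-up is that you believe this identity holds only modulo ``extra $\lambda$-monomials with three or more factors,'' and propose to kill those after pairing with $\psi^d$ via Faber's algorithm and Theorems~4.4--4.5. This worry is unfounded and the proposed fix is unnecessary (and would be hard to carry out as stated). The displayed identity is \emph{exact} at the class level. You already hold the tool: the vanishing ${\rm ch}_{2l}(\mathbb E)=0$ that you invoke is equivalent to the Mumford relations $\sum_{i+j=2l}(-1)^i\lambda_i\lambda_j=0$, i.e.\ $\lambda_l^2=2\sum_{j\ge1}(-1)^{j-1}\lambda_{l-j}\lambda_{l+j}$. Whenever Newton's recursion for $p_{2g-3}$ produces a cubic or higher $\lambda$-monomial, feeding these same quadratic relations back into it collapses everything to the two-term expression. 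For instance at $g=4$ the recursion gives $p_5=\lambda_1\lambda_2^{\,2}-3\lambda_2\lambda_3-\lambda_1\lambda_4$; substituting $\lambda_2^{\,2}=2\lambda_1\lambda_3-2\lambda_4$ and then $\lambda_1^{\,2}=2\lambda_2$ yields $p_5=\lambda_2\lambda_3-3\lambda_1\lambda_4$ on the nose. So no socle argument, no Faber algorithm, and no appeal to Theorems~4.4--4.5 is needed for this step. Note also that your displayed formula for ${\rm ch}_{2g-3}(\mathbb E)$ is missing the factor $(-1)^{g-1}$; this is precisely the sign you flag at the end, and it is resolved by ${\rm sgn}(B_{2g-2})=(-1)^g$.
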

\begin{proof} We apply Mumford's formulae \cite{Mu}
$$(2g-3)!\cdot{\rm ch}_{2g-3}(\mathbb
E)=(-1)^{g-1}(3\lambda_{g-3}\lambda_g-\lambda_{g-1}\lambda_{g-2}),$$
$$
{\rm ch}_{2g-3}(\mathbb E)= \frac{ B_{2g-2}}{(2g-2)!}\left[
\kappa_{2g-3}-\sum_{i=1}^{n}\psi_i^{2g-3}+\frac12\sum_{\xi\in\Delta}{l_{\xi}}_*
\left(\sum_{i=0}^{2g-4}\psi_{n+1}^i(-\psi_{n+2})^{2g-4-i}\right)\right].
$$
So the identity follows from Corollary 4.6.
\end{proof}

Both Theorems 4.4 and 4.5 can be extended without difficulty.

Let us use the notation
$$
L_g(y,z_{\underline{a}},w_{\underline{b}},x_{\underline{n}})
=\sum_{g'=0}^g\sum_{\underline{n}=I\coprod J}
F_{g'}(y,z_1,\dots,z_a,x_I)F_{g-g'}(-y,w_1,\dots,w_b,x_J).
$$

\begin{theorem} Let $a\geq0$, $b\geq0$, $n\geq1$. We have
\begin{enumerate}
\item[i)] For $k\geq 2g-3+a+b$,
$$\left[L_g(y,z_{\underline{a}},w_{\underline{b}},x_{\underline{n}})\right]_{y^k}=0.$$

\item[ii)] For $r_j\geq0$, $s_j\geq0$, $d_j\geq 1$ and $\sum r_j +\sum
s_j+\sum d_j=g+n$,
\begin{multline*}
\left[L_g(y,z_{\underline{a}},w_{\underline{b}},x_{\underline{n}})\right]_
{y^{2g-4+a+b}\prod_{j=1}^a z_j^{r_j}\prod_{j=1}^b
w_j^{s_j}\prod_{j=1}^n
x_j^{d_j}}\\
=\frac{1}{\prod_{j=1}^a (2r_j+1)!!\prod_{j=1}^b
(2s_j+1)!!}\cdot\frac{(-1)^b(2g-3+n+a+b)!}{4^g(2g-3+a+b)!\prod_{j=1}^n(2d_j-1)!!}.
\end{multline*}

\item[iii)] For $r_j\geq0$, $s_j\geq0$, $d_j\geq 1$, $\sum r_j +\sum
s_j+\sum d_j=g+n+1$ and $u\triangleq \#\{r_j=0\}$,
$v\triangleq\#\{s_j=0\}$, $w\triangleq\#\{d_j=1\}$,
\begin{multline*}
\left[L_g(y,z_{\underline{a}},w_{\underline{b}},x_{\underline{n}})\right]_
{y^{2g-5+a+b}\prod_{j=1}^a z_j^{r_j}\prod_{j=1}^b
w_j^{s_j}\prod_{j=1}^n
x_j^{d_j}}\\
=\frac{C}{\prod_{j=1}^a (2r_j+1)!!\prod_{j=1}^b
(2s_j+1)!!}\cdot\frac{(-1)^b(2g-3+n+a+b)!}{4^g(2g-4+a+b)!\prod_{j=1}^n(2d_j-1)!!},
\end{multline*}
where the constant $C$ is given by $$C\triangleq\sum_{j=1}^a
r_j-\sum_{j=1}^b
s_j+\frac{a-b}{2}+\frac{(5-u)u-(5-v)v}{2(2g+n+a+b-3-w)}.$$
\end{enumerate}
\end{theorem}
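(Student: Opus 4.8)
The plan is to run the induction that established Theorems 4.4 and 4.5 on a ``mixed'' object carrying both the power slots of $L_g^{a,b}$ and genuine extra marked points,
\begin{multline*}
\widehat L_g^{\alpha,\beta}(y;z_{\underline m};w_{\underline\ell};x_{\underline n})
=\sum_{g'=0}^{g}\sum_{\underline n=I\coprod J}
\Bigl(y+\sum_{j=1}^m z_j+\sum_{i\in I}x_i\Bigr)^{\alpha}
\Bigl(-y+\sum_{j=1}^\ell w_j+\sum_{i\in J}x_i\Bigr)^{\beta}\\
\times F_{g'}(y,z_1,\dots,z_m,x_I)\,F_{g-g'}(-y,w_1,\dots,w_\ell,x_J),
\end{multline*}
which reduces to $L_g^{a,b}$ when $m=\ell=0$ and to the $L_g$ of the present theorem when $\alpha=\beta=0$. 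For $\widehat L_g^{\alpha,\beta}$ I would prove the versions of (i)--(iii) obtained by reading $a$ as $\alpha+m$ and $b$ as $\beta+\ell$ throughout, but with $u$ and $v$ taken to be $\#\{r_j=0\}+\alpha$ and $\#\{s_j=0\}+\beta$ (and $w=\#\{d_j=1\}$). The point of this bookkeeping is that the string equation gives $\widehat L_g^{\alpha,\beta}\big|_{z_m=0}=\widehat L_g^{\alpha+1,\beta}(y;z_{\underline{m-1}};w_{\underline\ell};x_{\underline n})$, and symmetrically for $w_\ell$; one checks that all three statements are invariant under these substitutions --- which is exactly why $u,v$ must absorb $\alpha,\beta$ --- so a $z$ or $w$ of exponent $0$ can always be traded for an increment of $\alpha$ or $\beta$, the case $m=\ell=0$ being Theorems 4.4 and 4.5. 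Setting $\alpha=\beta=0$ at the end gives the theorem.

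I would first settle the base cases, $g=0$ and few marked points. For $g=0$ one has $F_0(v_1,\dots,v_k)=(v_1+\cdots+v_k)^{k-3}$, so $\widehat L_0^{\alpha,\beta}$ is an explicit polynomial and every statement --- including the value of $C$ at $g=0$ --- follows from a finite binomial computation, via repeated use of Lemma 4.1, exactly as in Lemma 4.3; for few marked points one argues as in Lemma 4.2 from the closed one- and two-point formulas, needing for (iii) one further coefficient in the $y$-expansion of $G(x,y)$. For the inductive step, on $g$ and on the number of marked points, I would multiply $\widehat L_g^{\alpha,\beta}$ by $2g+m+\ell+n$, substitute Proposition 2.3 into $(2g'+m+|I|)F_{g'}(y,z_{\underline m},x_I)$ and, symmetrically, into the other factor, and collect terms as in Theorems 4.4 and 4.5. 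This produces the main term $\frac{1}{12}\bigl(\widehat L_{g-1}^{\alpha+3,\beta}+\widehat L_{g-1}^{\alpha,\beta+3}\bigr)$ plus ``external'' terms, each a product of a $y$-free factor $F_{g''}(\cdots)$ carrying some of the variables, a power of a sum of a proper subset of the variables of degree strictly less than the number of variables in that subset, and an $\widehat L$ of lower complexity; after reducing by the string and dilaton equations to monomials with all $x$-exponents $\ge 1$ (and, for (i) and (ii), all $z,w$-exponents $\ge1$ too), the top-genus external terms drop out by a degree count and the rest by the inductive hypothesis. The polynomial identities between the $\widehat L^{\alpha\pm1,\beta\mp1}$ (the evident analogues of those in Theorems 4.4 and 4.5), together with the vanishing already proved, close the recursion; for (ii) the coefficient sum telescopes because $\sum_j(2d_j-1)+\sum_j(2r_j+1)+\sum_j(2s_j+1)=2(g+n)+m+\ell-n$ equals the normalizing factor $2g+m+\ell+n$.

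The genuinely new content, and the main obstacle, is statement (iii): its target $y$-coefficient lies one order deeper than in (ii), and the clean argument above breaks in two places. The telescoping fails, because the degree constraint is now $\sum r+\sum s+\sum d=g+n+1$ instead of $g+n$, leaving a remainder proportional to the leading coefficient computed in (ii); and the external $y$-free terms no longer all vanish at this depth, so one must evaluate their contribution term by term. One must also keep the boundary cases visible: an exponent $r_j=0$ (resp.\ $s_j=0$) removed by the string equation re-enters through $u$ (resp.\ $v$), and an exponent $d_j=1$ removed by the dilaton equation --- whose factor $2g-2+n$ is responsible for the ``$-w$'' in the denominator of $C$ --- re-enters through $w$. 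The hard part will be to verify that the telescoping remainder, the surviving external contribution, and these string/dilaton corrections add up to exactly
$$C=\sum_{j=1}^{m}r_j-\sum_{j=1}^{\ell}s_j+\frac{(\alpha+m)-(\beta+\ell)}{2}+\frac{(5-u)u-(5-v)v}{2\bigl(2g+n+(\alpha+m)+(\beta+\ell)-3-w\bigr)};$$
the delicacy is that $u,v,w$ must be counted together with the exponents hidden in $\alpha,\beta$ so that this formula is stable under the very string and dilaton reductions that drive the induction.
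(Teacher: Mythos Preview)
Your proposal is correct and follows essentially the same route as the paper: the paper's proof introduces precisely your mixed object (there called $L_g^{p,q}(y,z_{\underline a},w_{\underline b},x_{\underline n})$), checks $g=0$ by hand, and then simply asserts that the inductions of Theorems~4.4 and~4.5 carry over verbatim to give (i) and (ii), with (iii) ``proved similarly.'' Your outline is in fact more detailed than the paper's, especially in isolating where part (iii) departs from the Theorem~4.5 argument (the non-telescoping remainder, the surviving external terms, and the string/dilaton bookkeeping that forces $u,v,w$ into the constant $C$).
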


\begin{proof} When $g=0$, the proof is an easy verification.
Let $p,q\in\mathbb Z$.
\begin{multline*}
L_g^{p,q}(y,z_{\underline{a}},w_{\underline{b}},x_{\underline{n}})
=\sum_{g'=0}^g\sum_{\underline{n}=I\coprod J}(y+\sum_{i=1}^a z_i+\sum_{i\in I}x_i)^p(-y+\sum_{i=1}^b w_i+\sum_{i\in J}x_i)^q\\
\times F_{g'}(y,z_1,\dots,z_a,x_I)F_{g-g'}(-y,w_1,\dots,w_b,x_J).
\end{multline*}

Exactly the same argument of Theorem 4.4 will prove that for $k\geq
2g-3+p+q+a+b$,
$$[L_g^{p,q}(y,z_{\underline{a}},w_{\underline{b}},x_{\underline{n}})]_{y^k}=0.$$

Statements (ii) and (iii) can also be proved similarly as Theorem
4.5.
\end{proof}

Theorem 4.8 proves all conjectures in Section 3 of \cite{LX1}. We
may write down the coefficients of
$L_g(y,z_{\underline{a}},w_{\underline{b}},x_{\underline{n}})$
explicitly to get a lot of interesting identities of intersection
numbers. For example, when $a=1, b=0$,
\begin{multline*}
\left[L_g(y,z,x_{\underline{n}})\right]_{y^k z^r \prod_{j=1}^n
x_j^{d_j}}=\sum_{\underline{n}=I\coprod
J}\sum_{j=0}^{k}(-1)^j\langle\tau_j\prod_{i\in
I}\tau_{d_i}\rangle_{g'}\langle\tau_{k-j}\tau_r\prod_{i\in
J}\tau_{d_i}\rangle_{g-g'}\\
+\langle\tau_{k+2}\tau_r\prod_{j=1}^n\tau_{d_j}\rangle_g-(-1)^k\langle\tau_{k+r+1}\prod_{j=1}^n\tau_{d_j}\rangle_g
-\sum_{j=1}^n\langle\tau_r\tau_{d_j+k+1}\prod_{i\neq
j}\tau_{d_i}\rangle_g.
\end{multline*}

When $a=b=1$,
\begin{multline*}
\left[L_g(y,z,w,x_{\underline{n}})\right]_{y^k z^r w^s\prod_{j=1}^n
x_j^{d_j}}=\sum_{\underline{n}=I\coprod
J}\sum_{j=0}^{k}(-1)^j\langle\tau_j\tau_s\prod_{i\in
I}\tau_{d_i}\rangle_{g'}\langle\tau_{k-j}\tau_r\prod_{i\in
J}\tau_{d_i}\rangle_{g-g'}\\
-\langle\tau_{k+s+1}\tau_r\prod_{j=1}^n\tau_{d_j}\rangle_g-(-1)^k\langle\tau_{k+r+1}\tau_s\prod_{j=1}^n\tau_{d_j}\rangle_g.
\end{multline*}

\vskip 30pt
\section{Gromov-Witten invariants}

We will generalize vanishing identities in previous sections to
Gromov-Witten invariants.

Let $X$ be a smooth projective variety and $\overline{\mathcal
M}_{g,n}(X,\beta)$ denote the moduli stack of stable maps of genus
$g$ and degree $\beta\in H_2(X, \mathbb Z)$ with $n$ marked points.
There are several canonical morphisms:
\begin{enumerate}
\item[i)] Let ${\rm ev}: \overline{\mathcal
M}_{g,n}(X,\beta)\rightarrow X^n$ be the evaluation maps at the
marked points:
$$
{\rm ev} : {(f:C\to X, x_1,\dots,x_n) } \mapsto \bigl(
f(x_1),\dots,f(x_n) \bigr) \in X^n .
$$

\item[ii)] Let
$\pi : \overline{\mathcal M}_{g,n+1}(X,\beta) \rightarrow
\overline{\mathcal M}_{g,n}(X,\beta)$ be the map of forgetting the
last marked point $x_{n+1}$ and stabilizing the resulting curve.
\end{enumerate}

The forgetful morphism $\pi$ has $n$ canonical sections
$$
\sigma_i : \overline{\mathcal M}_{g,n}(X,\beta)\rightarrow
\overline{\mathcal M}_{g,n+1}(X,\beta) ,
$$
corresponding to the $n$ marked points. Let
$$
\omega=\omega_{\overline{\mathcal
M}_{g,n+1}(V,\beta)/\overline{\mathcal M}_{g,n}(X,\beta)}
$$
be the relative dualizing sheaf and $\Psi_i$ the cohomology class
$c_1(\sigma_i^*\omega)$.

If $\gamma_1,\dots,\gamma_n\in H^*(X,\mathbb Q)$, the Gromov-Witten
invariants are defined by
$$
\langle\tau_{d_1}(\gamma_1) \dots
\tau_{d_n}(\gamma_n)\rangle^V_{g,\beta} = \int_{[\overline{\mathcal
M}_{g,n}(V,\beta)]^{\rm virt}} \Psi_1^{d_1} \cdots \Psi_n^{d_n} \cup
{\rm ev}^*(\gamma_1\boxtimes \cdots \boxtimes \gamma_n).
$$

Given a basis $\{T_a\}$ for $H^*(X,\mathbb Q)$, we may use
$g_{ab}=\int_X T_a\cup T_b$ and its inverse $g^{ab}$ to lower and
raise indices. We denote by $T^a=g^{ab}T_b$ and apply the Einstein
summation convention.

The genus $g$ Gromov-Witten potential of $X$ is defined by
$$\langle\langle\tau_{d_1}(\gamma_1)\cdots\tau_{d_n}(\gamma_n)\tau\rangle\rangle_g=
\sum_{\beta}\left\langle\tau_{d_1}(\gamma_1)\cdots\tau_{d_n}(\gamma_n)\exp\left(\sum_{m,a}t_m^a\tau_m(T_a)\right)\right\rangle^X_{g,\beta}q^\beta.$$
Very readable expositions of Gromov-Witten invariants can be found
in \cite{Ge, Va}.

We adopt Gathmann's convention \cite{Ga} in the following which will
simplify the notation, namely we define
$$\langle\tau_{-2}(pt)\rangle_{0,0}^X=1,$$
$$\langle\tau_{m}(\gamma_1)\tau_{-1-m}(\gamma_2)\rangle_{0,0}^X=(-1)^{\max(m,-1-m)}\int_X\gamma_1\cdot\gamma_2,
\quad m\in\mathbb Z.$$ All other Gromov-Witten invariants that
contain a negative power of a cotangent line are defined to be zero.

Motivated by our previous results, we conjecture the following
relations for Gromov-Witten invariants, which we have checked in
various cases. We deem they are interesting constraints on
Gromov-Witten invariants.

\begin{conjecture}
Let $x_i, y_i\in H^*(X)$ and $k\geq 2g-3+r+s$. Then
$$\sum_{g'=0}^g\sum_{j\in\mathbb
Z}(-1)^j\langle\langle\tau_j(T_a)\prod_{i=1}^r\tau_{p_i}(x_i)\rangle\rangle_{g'}
\langle\langle\tau_{k-j}(T^a)\prod_{i=1}^s\tau_{q_i}(y_i)\rangle\rangle_{g-g'}=0.$$
Note that $j$ runs over all integers.
\end{conjecture}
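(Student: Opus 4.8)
The plan is to transplant the proof of Theorems 4.4 and 4.8(i) to the Gromov--Witten setting, with the cohomology insertions $\tau_{p_i}(x_i),\tau_{q_i}(y_i)$ playing the role of the inert variables $z_j,w_j$ of Theorem 4.8. First I would unwind the statement. Expanding the double brackets in the $t^a_m$-variables and summing $\beta=\beta'+\beta''$ over curve classes, Conjecture 5.1 becomes the family of identities, for all $n\ge 0$ and all extra insertions $\tau_{m_1}(T_{a_1}),\dots,\tau_{m_n}(T_{a_n})$,
$$
\sum_{g'=0}^{g}\ \sum_{\underline{n}=I\coprod J}\ \sum_{j\in\mathbb Z}(-1)^j\big\langle\tau_j(T_a)\textstyle\prod_{i}\tau_{p_i}(x_i)\prod_{i\in I}\tau_{m_i}(T_{a_i})\big\rangle_{g'}\,\big\langle\tau_{k-j}(T^a)\textstyle\prod_{i}\tau_{q_i}(y_i)\prod_{i\in J}\tau_{m_i}(T_{a_i})\big\rangle_{g-g'}=0 ,
$$
where $j$ runs over $\mathbb Z$ with the genus-zero conventions of Section 5. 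Using the genus-zero two-point function $\langle\tau_m(\gamma_1)\tau_{-1-m}(\gamma_2)\rangle^X_{0,0}=(-1)^{\max(m,-1-m)}\int_X\gamma_1\gamma_2$ exactly as in the proof of Corollary 3.2, this left-hand side is the $y^k$-coefficient of a split two-factor generating function which --- up to the $H^*(X)$-decorations and the Poincar\'e pairing $g^{ab}$ contracting the extracted $y$-variable --- is precisely the function $L^{0,0}_g$ of Theorem 4.8(i).

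Second, I would run the double induction on $g$ and on the number $n$ of extra insertions, exactly as in the proof of Theorem 4.4: substitute a recursion into one of the two factors, observe that every resulting term has strictly smaller genus or strictly fewer extra insertions in one factor, apply the inductive hypothesis, and close up with the string and dilaton equations, which hold verbatim for descendant Gromov--Witten potentials. The one ingredient not available ``for free'' is the Gromov--Witten replacement for Proposition 2.3. In the point case that proposition is, as the authors observe, the first KdV equation packaged by the dilaton equation; equivalently it is the DVV recursion (separating node plus genus reduction), i.e.\ the Virasoro constraint for a point. Its generalization is the Eguchi--Hori--Xiong Virasoro constraint for $X$, from which one should extract a bilinear recursion of exactly the shape of Proposition 2.3. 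Hence on every target for which the Virasoro conjecture is known --- projective spaces after Givental, curves, flag varieties, and more generally varieties with semisimple quantum cohomology after Teleman --- the induction runs and Conjecture 5.1 follows unconditionally.

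Third, the base cases. Genus $0$ should reduce, via the genus-zero topological recursion relation together with the divisor and string equations, to a polynomial identity for the genus-zero primary (WDVV) potential of $X$, of the same flavour as Lemmas 4.1 and 4.3 but with quantum multiplication in place of ordinary multiplication; the hypothesis $k\ge r+s-3$ is exactly what forces enough of the movable variables to be absorbed for the cancellation to be formal. The degenerate cases of the induction --- one factor carrying no extra insertion, or $n$ minimal --- are handled with the explicit one- and two-point descendant correlators of $X$, read off from the fundamental solution ($S$-matrix) of the quantum differential equation, just as Lemma 4.2 used the closed form of $G(y,x)$.

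The main obstacle is twofold. Technically, the higher-genus universal equations that must replace the clean KdV recursion (Getzler's genus-one relation and its higher analogues, rather than an integrable hierarchy) carry genuine boundary-stratum correction terms, and the real work is to show that these corrections reorganize into terms covered by the inductive hypothesis rather than spoiling the vanishing. Conceptually, the argument as sketched rests on the Virasoro conjecture for $X$, so the honest deliverable is an unconditional proof for all targets where Virasoro is a theorem, with the general statement remaining a corollary of Virasoro --- consistent with the ``checked in various cases'' status reported above.
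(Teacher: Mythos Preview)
The paper does not prove this statement: it is stated as Conjecture~5.1, motivated by the point-target results and ``checked in various cases''. The paper's only account of a proof is the reference to X.~Liu and R.~Pandharipande, whose method is quite different from yours: they establish topological recursion relations directly in the tautological ring of $\overline{\mathcal M}_{g,n}$ via virtual localization, and then export those relations to arbitrary $X$ through the splitting axiom and cotangent-line comparison. That route is unconditional in $X$ --- no Virasoro, no semisimplicity --- precisely because the identity is proved at the level of the moduli of curves before any target enters.

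Your plan, by contrast, has a genuine gap that you yourself flag but do not close. The engine of the point-target argument is Proposition~2.3, which is the first KdV equation repackaged by the dilaton equation; its special feature is a clean bilinear genus-reduction with no extra boundary debris. For a general $X$ there is no KdV hierarchy, and the Virasoro operators $L_n$ do not hand you a recursion of that shape: the $L_n$ mix descendant levels, involve the Euler vector field and grading operators of $H^*(X)$, and produce terms that are not simply ``smaller $g$ or fewer insertions'' in the way Theorem~4.4 requires. Saying one should ``extract a bilinear recursion of exactly the shape of Proposition~2.3'' from Eguchi--Hori--Xiong is exactly the missing lemma, and it is not clear it exists in that form. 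So even granting Virasoro on $X$, the induction as written does not run; and for general $X$ the approach is conditional on an open conjecture, whereas the Liu--Pandharipande proof the paper cites is not.

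In short: your proposal is a reasonable heuristic for why the conjecture should be a shadow of the point-target theorem, but it is not a proof, and its strategy diverges from the one the paper credits. The decisive idea there is to work on $\overline{\mathcal M}_{g,n}$ itself rather than on the Gromov--Witten side.
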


 Conjecture 5.1 is a
direct generalization of Theorem 4.8(i) in the point case. For
example, when $r=s=0$, Conjecture 5.1 becomes
$$\langle\langle\tau_{2k}(1)\rangle\rangle_{g}-\sum_{m,a}t_m^a\langle\langle\tau_{m+2k-1}(T_a)\rangle\rangle_{g}+\frac{1}{2}
\sum_{g'=0}^g\sum_{j=0}^{2k-2}(-1)^j\langle\langle\tau_{j}(T_a)\rangle\rangle_{g'}\langle\langle\tau_{2k-2-j}(T^a)\rangle\rangle_{g-g'}=0$$
for $k\geq g$.

\begin{conjecture}
Let $k>g$. Then
\begin{equation} \label{eqgw1}
\sum_{j=0}^{2k}(-1)^j\langle\langle\tau_{j}(T_a)\tau_{2k-j}(T^a)\rangle\rangle_g^X=0.
\end{equation}
We also have
\begin{equation} \label{eqgw2}
\frac12\sum_{j=0}^{2g-2}(-1)^j\langle\langle\tau_{j}(T_a)\tau_{2g-2-j}(T^a)\rangle\rangle_{g-1}=\frac{(2g)!}{B_{2g}}\langle\langle{\rm
ch}_{2g-1}(\mathbb E)\rangle\rangle_g.
\end{equation}
\end{conjecture}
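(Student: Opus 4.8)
The plan is to establish the two assertions of Conjecture 5.2 in parallel with the treatment of the point case in Sections 2 and 3, reducing both to the vanishing statement of Conjecture 5.1. I would handle \eqref{eqgw2} first. Pull back Mumford's Grothendieck--Riemann--Roch formula for ${\rm ch}_{2g-1}(\mathbb E)$ (displayed in the Introduction) along the stabilization map $\overline{\mathcal M}_{g,n}(X,\beta)\to\overline{\mathcal M}_{g,n}$, and integrate it against $\exp(\sum_{m,a}t_m^a\tau_m(T_a))$ over the virtual class, summing over $n$ and $\beta$. By the splitting axiom the boundary term $\frac12\sum_{\xi\in\Delta}{l_\xi}_*\big(\sum_{i=0}^{2g-2}\psi_{n+1}^i(-\psi_{n+2})^{2g-2-i}\big)$ breaks into a non-separating contribution -- a connected genus-$(g-1)$ surface carrying the two new marked points, which yields exactly $\frac12\sum_{j=0}^{2g-2}(-1)^j\langle\langle\tau_j(T_a)\tau_{2g-2-j}(T^a)\rangle\rangle_{g-1}$ (using $(-1)^{2g-2-j}=(-1)^j$) -- and a separating contribution $\frac12\sum_{g'=0}^{g}\sum_{j=0}^{2g-2}(-1)^j\langle\langle\tau_j(T_a)\rangle\rangle_{g'}\langle\langle\tau_{2g-2-j}(T^a)\rangle\rangle_{g-g'}$. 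For the remaining terms, $\kappa_{2g-1}$ contributes $\langle\langle\tau_{2g}(1)\rangle\rangle_g$: this comes from the projection formula for the map that forgets a marked point together with $\pi_*\Psi_{n+1}^{2g}=\kappa_{2g-1}$, the comparison corrections $\Psi_i=\pi^*\Psi_i+D_{i,n+1}$ dropping out because $\Psi_{n+1}\cdot D_{i,n+1}=0$; and $\sum_i\psi_i^{2g-1}$ contributes $\sum_{m,a}t_m^a\langle\langle\tau_{m+2g-1}(T_a)\rangle\rangle_g$. Collecting everything, $\frac{(2g)!}{B_{2g}}\langle\langle{\rm ch}_{2g-1}(\mathbb E)\rangle\rangle_g$ equals the right-hand side of \eqref{eqgw2} plus the quantity $\langle\langle\tau_{2g}(1)\rangle\rangle_g-\sum_{m,a}t_m^a\langle\langle\tau_{m+2g-1}(T_a)\rangle\rangle_g+\frac12\sum_{g'}\sum_{j=0}^{2g-2}(-1)^j\langle\langle\tau_j(T_a)\rangle\rangle_{g'}\langle\langle\tau_{2g-2-j}(T^a)\rangle\rangle_{g-g'}$, and this last quantity is precisely the left-hand side of the displayed $r=s=0$ case of Conjecture 5.1 evaluated at $k=g$, hence zero. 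So \eqref{eqgw2} follows from Conjecture 5.1.

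The identity \eqref{eqgw1}, namely $\sum_{j=0}^{2k}(-1)^j\langle\langle\tau_j(T_a)\tau_{2k-j}(T^a)\rangle\rangle_g=0$ for $k>g$, is the Gromov--Witten analogue of Corollary 3.2(iii), and I would transpose the argument of Section 3. The two ingredients are: (1) Conjecture 5.1, which plays the role of Proposition 3.1(i) and, after multiplying the relevant generating series by the polynomials $(\sum_{i\in I}x_i)^2$ that turn into extra descendant insertions, also of Proposition 3.1(ii); and (2) a Gromov--Witten substitute for the recursion Proposition 2.3. In the point case the latter is, as noted in Section 2, ``embodied in the first KdV equation''; over a general target $X$ its replacement is built from the genus-zero topological recursion relation -- equivalently the action of the $L_1$ Virasoro operator on the total descendant potential -- together with the string and dilaton equations. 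Granting such a recursion, I would rearrange it exactly as in the proof of Corollary 3.2 into an expression for the ``$y^{2k}$-coefficient of the two-pointed genus-$g$ potential specialized at a $\pm y$ pair'' in terms of (a) a genus-$(g-1)$ term of the same shape but with smaller $k$ and (b) products of strictly lower-genus potentials controlled by Conjecture 5.1; a nested induction on $g$ and on $k-g$, using the string equation to eliminate $\tau_0$-insertions and the dilaton equation to eliminate $\tau_1$-insertions, then yields the vanishing, with base cases supplied by the genus-zero structure (WDVV / quantum cohomology) in the way Lemmas 4.2 and 4.3 supply them in the point case.

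The main obstacle is not \eqref{eqgw2}: modulo Conjecture 5.1 that reduces to Mumford's formula plus the standard, if slightly delicate, bookkeeping identifying $\kappa_a$ and $\psi_i$ on the space of stable maps with descendant insertions -- the only real care being the contracted-component corrections of the stabilization map, handled by $\Psi_{n+1}\cdot D_{i,n+1}=0$ and $D_{i,n+1}\cdot D_{j,n+1}=0$. The genuine difficulty lies in \eqref{eqgw1}: producing, in the needed generality, the Gromov--Witten recursion that plays the part of Proposition 2.3. Unlike the point case, where the first KdV equation is unconditional, over an arbitrary $X$ this recursion is a low-level instance of the Virasoro constraints, which are theorems only for restricted classes of targets (projective spaces, and more generally varieties with semisimple quantum cohomology). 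Accordingly the cleanest honest conclusion is conditional: the argument above proves Conjecture 5.2 for every $X$ for which Conjecture 5.1 and the relevant Virasoro constraint are known -- in particular unconditionally in all the cases where we have checked it -- while a fully general proof would require first establishing Conjecture 5.1 and the requisite Virasoro relation.
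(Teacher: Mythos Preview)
Your treatment of \eqref{eqgw2} matches the paper's: the Faber--Pandharipande Chern character formula expresses $\frac{(2g)!}{B_{2g}}\langle\langle{\rm ch}_{2g-1}(\mathbb E)\rangle\rangle_g$ as the left-hand side of \eqref{eqgw2} (you wrote ``right-hand side'' by a slip) plus precisely the $r=s=0$, $k=g$ instance of Conjecture~5.1, so the two statements are equivalent. Note, though, that the paper does not itself prove Conjecture~5.2; it records this equivalence and then cites Liu--Pandharipande \cite{LiP}, whose argument produces topological recursion relations on $\overline{\mathcal M}_{g,n}$ by virtual localization and transports them to Gromov--Witten theory via the splitting axiom---a route quite different from your proposed induction.

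Where you diverge is \eqref{eqgw1}. You try to build a Gromov--Witten analogue of Proposition~2.3 from a Virasoro-type constraint and correctly flag this as conditional. The paper avoids this entirely: the \emph{same} Chern character identity, applied at genus $g+1$ with odd degree $2k+1$, writes $\frac{(2k+2)!}{B_{2k+2}}\langle\langle{\rm ch}_{2k+1}(\mathbb E)\rangle\rangle_{g+1}$ as the $r=s=0$, genus-$(g+1)$ instance of Conjecture~5.1 plus the non-separating term $\tfrac12\sum_{j=0}^{2k}(-1)^j\langle\langle\tau_j(T_a)\tau_{2k-j}(T^a)\rangle\rangle_g$. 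For $k>g$ one has $2k+1\geq 2(g+1)+1$, and Mumford's relation $c(\mathbb E)c(\mathbb E^{*})=1$ forces ${\rm ch}_m(\mathbb E)=0$ for every $m\geq 2h$ at genus $h$ (the Chern roots $a_i$ satisfy $\prod_j(t-a_j^2)=t^h$, whence $a_i^{2h}=0$); hence the left side vanishes and \eqref{eqgw1} again reduces to Conjecture~5.1. Thus the paper obtains the uniform equivalence $\text{Conj.~5.1 }(r{=}s{=}0)\Longleftrightarrow\eqref{eqgw1}\,{+}\,\eqref{eqgw2}$ with no recursion, no Virasoro, and no conditionality beyond Conjecture~5.1 itself. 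Your detour through a KdV-type recursion is therefore unnecessary: the observation you are missing is simply the purely algebraic Hodge-bundle vanishing ${\rm ch}_m(\mathbb E)=0$ for $m>2g$.
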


Similar vanishing conjectures 5.1 and 5.2 can also be made about
Witten's r-spin intersection numbers \cite{Wi2}. Thus these
vanishing identities should be regarded as some universal
topological recursion relations (TRR) valid in all genera.

Note that by the Chern character formula of Faber and Pandharipande
\cite{FaPa} and the fact $\rm ch_{k}(\mathbb E)=0$, $k>2g$, we have
the equivalence\bigskip

\centerline{Conjecture 5.1 ($r=s=0$) $\Longleftrightarrow$
identities \eqref{eqgw1} and \eqref{eqgw2}} \bigskip

Recently, X. Liu and R. Pandharipande \cite{LiP} give a proof of the
above Conjectures 5.1 and 5.2. Their proof uses virtual localization
to get topological recursion relations in the tautological ring of
moduli spaces of curves, which are translated into universal
equations for Gromov-Witten invariants by the splitting axiom and
cotangent line comparison equations.

Earlier, X. Liu \cite{Liu} proves the case $r=s=0$ of Conjecture 5.1
and Conjecture 5.2 \eqref{eqgw1} both for $g\leq 2$ using
topological recursion relations in low genus, which is tour de
force, since the number of terms in TRR increase very rapidly with
$g$. For example, Getzler's TRR in $g=2$ contains $15$ terms.

$$ \ \ \ \ $$

\end{document}